\newtheorem{proposition}{Proposition}
\newtheorem{lemma}{Lemma}
\theoremstyle{definition}
\newtheorem{definition}{Definition}
\newtheorem{remark}{Remark}
\newcommand{\exclude}[1]{}
\newcommand{\Ze}{\mathbb{Z}}
\renewcommand{\vec}[1]{\mathbf{#1}}
\newcommand{\PL}{P}
\newcommand{\sd}{\text{speed}}
\newcommand{\dpt}{\text{depot}}
\title{A Joint Routing and Speed Optimization Problem}
\author{Ricardo Fukasawa\thanks{Department of Combinatorics and Optimization, Faculty of Mathematics, University of Waterloo. Email: rfukasawa@uwaterloo.ca.}, Qie He\thanks{Department of Industrial and Systems Engineering, University of Minnesota. Email: qhe@umn.edu.}, Fernando Santos\thanks{Department of Engineering, Federal University of Itajub\'a - Campus Itabira. Email: fernandosantos@unifei.edu.br}, Yongjia Song\thanks{Department of Statistical Sciences and Operations Research, Virginia Commonwealth University. Email: ysong3@vcu.edu.}}
\date{}
\begin{document}
\maketitle
\begin{abstract}
Fuel cost contributes to a significant portion of operating cost in cargo transportation. Though classic 
routing models usually treat fuel cost as input data, fuel consumption heavily depends on the travel speed, 
which has led to the study of optimizing speeds over a given fixed route.
In this paper, we propose a joint routing and speed optimization problem to minimize the total cost, which includes the fuel consumption cost.
The only assumption made on the dependence between the fuel cost and travel speed is that it is a strictly convex differentiable function. 
This problem is very challenging, with medium-sized instances already difficult for a general mixed-integer convex optimization solver. 
We propose a novel set partitioning formulation and a branch-cut-and-price algorithm to solve this problem. 
Our algorithm clearly outperforms the off-the-shelf optimization solver, and is able to solve some 
benchmark instances to optimality for the first time.
\end{abstract}


\section{Introduction}
Speed has significant economic and environmental impacts on transportation. Slow streaming, meaning that a ship travels at a much lower speed than its maximum speed, is a common practice in maritime transportation to reduce fuel consumption and greenhouse gas emissions~\citep{maloni2013slow}. For many diesel-powered heavy-duty trucks, the fuel consumption rate is approximately a cubic function of its traveling speed~\citep{barth2004modal}, implying that a $10\%$ reduction in speed could save approximately $27\%$ fuel consumption. In addition, greenhouse gas (GHG) emissions, especially CO2 emissions, are directly proportional to the fuel consumption, so improved fuel efficiency also implies less GHG emissions. On the other hand, a lower speed makes it more difficult to satisfy customers' service requests, which usually are required to be done within certain time windows, and therefore affects other operational decisions such as routing and fleet deployment. 

In this paper, we propose a transportation model that jointly optimizes the routing and speed decisions with an objective of minimizing total costs, including fuel consumption costs. In particular, we are interested in the following joint routing and speed optimization problem (JRSP).
\begin{framed}
Given a network of customer requests, how can we find the set of routes and at what speed should each vehicle/ship travel over each leg of the route to minimize the overall cost, while respecting all the operational constraints such as capacity and service time windows?
\end{framed}

We model the fuel cost over each leg as a strictly convex differentiable function of the average travel speed over that leg. This assumption is satisfied by many empirical fuel consumption and emission models in maritime and road transportation, such as the cubic approximation used for tankers, bulk carriers, or ships of small size~\citep{notteboom2009effect}, the transportation emissions and energy consumption models developed by the European Commission~\citep{hickman1999methodology}, and the comprehensive emission model for trucks~\citep{barth2000development}.

The combination of routing and speed decisions makes the JRSP significantly more difficult to solve than the more classical vehicle routing problems (VRPs). For instance, there are JRSP instances of 20 customers that cannot be solved to optimality by state-of-the-art optimization software, while the latest exact algorithm for capacitated vehicle routing problems is able to routinely solve instances with hundreds of customers~\citep{pecin2014improved}. The reason is that JRSP is essentially a mixed-integer convex (non-linear) program (MICP), and the development of general MICP solvers is significantly lagging behind that of mixed-integer linear program solvers. 
The goal of this work is to reduce this performance discrepancy by developing an efficient algorithm for the JRSP 
that exploits its structure.

The most successful exact algorithms in practice for classical VRPs are based on the 
branch-and-price algorithm~\citep{Barnhart1998BP, Desrochers1992VRPTW, Lubbcke2005CG}, in which a set
covering/partitioning formulation is solved by branch and bound, the linear programming relaxation at 
each node of the branch-and-bound tree is solved by column generation, and the problem of generating a column 
of the linear program with negative reduced cost, namely the \emph{pricing} problem, is solved by an efficiently
implemented labeling algorithm. Cutting planes can also be added at each node of the branch-and-bound tree
to further strengthen the relaxation, leading to the branch-cut-and-price (BCP) algorithm.

One of the most 
critical components that affect the computational efficiency of these algorithms is how the pricing problem
is solved. The pricing problem of the VRP is typically formulated as a (elementary or non-elementary) shortest path problem with resource constraints (SPPRC)~\citep{Irnich2005SPPRC}. The elementary version is known to be strongly 
NP-hard~\citep{Dror1994complexity}, and specialized dynamic programming algorithms and several fast heuristics are used to solve it within reasonable time limits. 
In classical VRPs, the cost of a given route in the pricing problem 
is easy to compute by simply keeping track of the cumulative cost incurred so
far and adding the cost of any extra arc.
The additional challenge of the JRSP is that
the speed over each arc (and thus the cost) is no longer a resource that gets accumulated throughout the route.
In fact it is a decision variable that affects both feasibility and optimality of the routes and so the optimal 
speed (and ultimately the route's cost) can only be determined after the whole
route is given. There is no clear notion of a partial cost that gets accumulated as
the partial route is determined. For instance, a partial route may have its
lowest cost by being traversed in a very low speed, but to be able to extend it to a
new customer in time, the speeds of the partial route need to be significantly
increased and so the final cost cannot be determined based on combining partial
routes.



We propose a novel set partitioning formulation and a BCP algorithm to overcome such a challenge. 
In our formulation, each column represents the combination of a route and a speed profile over that route. 
The pricing problem, which can be seen as a joint shortest path and speed optimization problem, 
can be solved efficiently by a labeling algorithm due to our new formulation. 
The novelty of our labeling algorithm is that, for every path extension, we generate \emph{three} 
labels for each customer to be visited next: two labels in which this customer is served at the beginning 
or the end of its time window, and one label in which we only store a speed vector that satisfies 
the property of optimal speeds. With this new formulation, we do not need to keep track of every 
possible speed profile when generating a route, and are able to derive easy-to-check dominance rules 
that keep the pricing problem manageable. We test our algorithm on a variety of instances in maritime 
and road transportation. Our algorithm shows a significant advantage over one state-of-the-art general optimization solver. The contributions of this paper can be summarized as follows.
\begin{itemize}
	\item We propose a general joint routing and speed optimization model to improve fuel efficiency and reduce emissions. The model is able to accommodate many fuel consumption and GHG emission models used in practice.
	\item We propose a new set partitioning formulation for the JRSP by exploring the property of optimal speeds over a fixed route. We then develop a practical BCP algorithm in which the pricing problem can be solved efficiently by a labeling algorithm with easy-to-check dominance rules.
	\item We test on a set of instances in maritime and road transportation. We are able to solve instances of much larger sizes than the ones that a state-of-the-art optimization software could handle. To the best of our knowledge, some instances in the literature are solved to optimality for the first time.
\end{itemize}

The rest of the paper is organized as follows. Section~\ref{sec:literature} reviews models and algorithms related to the JRSP. Section~\ref{sec:problem} gives a detailed description of the problem and introduces the new set partitioning formulation. Sections~\ref{sec:labeling} and~\ref{sec:dominance} elaborate the labeling algorithm for the pricing problem as well as the dominance rules used in the labeling algorithms. Section~\ref{sec:other} introduces other components of the proposed algorithm. Extensive computational results are provided in Section~\ref{sec:computation}. We conclude in Section~\ref{sec:conclusion}.

\section{Literature Review} \label{sec:literature}

\subsection{The branch-and-price algorithm for the VRP}
The branch-and-price and BCP algorithms have consistently been the most successful exact solution methods for a large variety of vehicle routing and crew scheduling problems in practice~\citep{Barnhart1998BP, Desrochers1992VRPTW, pecin2014improved}, due to the modeling power of the set partitioning/covering formulation in handling complex costs and side constraints~\citep{desaulniers1998unified}, the tight relaxation bounds obtained by these formulations, and effective algorithms that handle the dynamic generation of columns in solving the relaxation~\citep{desaulniers2006column, Lubbcke2005CG}, among others. For the VRP, the pricing problem of column generation is usually formulated as an elementary shortest path problem with resource constraints. This problem is strongly NP-hard~\citep{Dror1994complexity}, and to solve it efficiently in practice, problem-specific dominance rules are needed to avoid enumerating too many path extensions. In addition, the requirement of an elementary route is usually relaxed, allowing routes with cycles~\citep{Desrochers1992VRPTW} or ng-routes~\citep{baldacci2011ng}, to speed up the column generation procedure.

\subsection{The speed optimization problem}
Given a fixed route, the problem of finding the optimal speed over each arc to minimize the total fuel consumption while respecting time-window constraints is called the speed optimization problem (SOP). The SOP is first considered by~\cite{fagerholt2010reducing} in the context of maritime transportation. When the fuel consumption function is convex in the average speed over each arc, the SOP is a convex minimization problem. In \cite{norstad2011tramp} an iterative algorithm to solve the SOP is developed. The algorithm runs in time quadratic in the number of vertices on the route, and its correctness is proven in~\cite{hvattum2013analysis} assuming the fuel consumption function is convex and non-decreasing in the average speed. The work in \cite{kramer2014matheuristic} further modifies the algorithm in~\cite{norstad2011tramp} to solve the SOP with labor cost and varying departure time at the depot. The correctness of these adapted algorithms in~\cite{demir2012adaptive} and \cite{kramer2014matheuristic} is proved recently in~\cite{kramer2015speed}.

\subsection{Integration of speed with other decisions}
In maritime transportation, the integration of sailing speeds with other decisions such as liner 
dispatching and routing~\citep{wang2012sailing, wang2016fundamental} and berth 
allocation~\citep{alvarez2010methodology, du2011berth} have been studied extensively; we refer the readers to 
recent surveys~\citep{meng2013containership, psaraftis2014ship} and the references therein. 
One problem closely related to our problem is the tramp ship routing and scheduling
problem~\citep{norstad2011tramp}, in which each tramp ship needs to select the pickup and delivery
location, its route, and the sailing speed over each leg of the routes to maximize the overall operating profit.
The work in \cite{norstad2011tramp} proposes a heuristic for this problem.

In road transportation, many recent VRPs have considered minimizing fuel consumption and emissions as the
main objective~\citep{demir2014review,eglese2014,lin2014survey, fukasawa2014branch, bektacs2016green}, among 
which only a few treat speed as a decision variable. The pollution routing problem (PRP), proposed
by~\cite{bektacs2011pollution}, aims to find a set of routes as well as speeds over each arc of the routes
to minimize the total fuel, emission, and labor costs. The fuel consumption function used in the PRP is based
on the emission model developed in~\cite{barth2004modal} for a heavy-duty diesel truck. 
The PRP is
solved by a mixed-integer linear program solver in~\cite{bektacs2011pollution} by allowing ten speed values
over each arc, and later by an adaptive large neighborhood search heuristic in~\cite{demir2012adaptive}.
The work in \cite{kramer2014matheuristic} develops a multi-start iterative local search framework for the
PRP with varying departure time. In \cite{jabali2012analysis}, a time-dependent VRP is studied, in which the whole planning horizon is divided
into two periods: a peak period with fixed low vehicle speed and an off-peak period with high free-flow speed, 
and the goal is to find a set of routes and a uniform upper bound on the free-flow speed over all arcs to 
minimize the total fuel, emission, and labor costs. The model is solved by a tabu search heuristic. 

Motivated by the integrated models in maritime transportation and the PRP, we propose the JRSP with general strictly convex cost functions, accommodating many fuel consumption and emission models \citep{hickman1999methodology, barth2000development, notteboom2009effect, fagerholt2010reducing, psaraftis2013speed}. 
Two algorithmic results closely related to our work are \cite{dabia2014exact} and \cite{Fuka2015TS}. In \cite{dabia2014exact}, a branch-and-price algorithm is developed for a variant of the PRP which assumes that the speed must be constant throughout every arc of a given route. In \cite{Fuka2015TS}, an arc-based mixed-integer second-order cone formulation is proposed for the PRP. Therefore the work proposed in this article is, to the best of our knowledge, the first exact
branch-and-price based approach for the JRSP. The results show that our new formulation and algorithm indeed allow significant reduction in terms of computational time.

\section{Problem Description and Formulation} \label{sec:problem}
\subsection{Problem description}
Throughout the paper, we work on the delivery version of the JRSP with asymmetric distance matrix and homogeneous vehicles. The pickup version can be handled in a similar way. Let $N = (V,A)$ be a complete directed graph and $V=\{0,1,2,\ldots,n\}$, where vertex $0$ denotes the depot and $V_0:=\{1,2,\ldots,n\}$ denotes the set of customers. There are $K$ identical vehicles, each with capacity $Q$, available at the depot. Each customer $i\in V_0$ has a demand $q_i$, a service time window $[a_i,b_i]$, within which the customer can be served, and a service duration $\tau_i$. The depot has a time window $[a_0, b_0]$ which represents the whole planning horizon. For each $(i,j)\in A$, $d_{ij}$ is the distance from vertex $i$ to vertex $j$. The fuel consumption per unit distance traveled (liter per kilometer) at speed $v$ is $f(v)$. We assume that the rate function $f$ is \emph{strictly convex and differentiable}. These properties are satisfied by many empirical fuel consumption models in maritime transportation~\citep{notteboom2009effect, psaraftis2013speed} and road transportation~\citep{hickman1999methodology, barth2000development, barth2004modal}, which are typically polynomial functions. Then the cost over arc $(i,j)$ is $d_{ij}f(v_{ij})$. We also assume that the vehicle speed over each arc cannot exceed a speed limit $u$ that is common for all arcs.
\begin{definition}
A \emph{route} is a walk $(i_0, i_1,\ldots,i_h,i_{h+1})$, where $i_0=i_{h+1}=0$, $i_j\in V_0$ for $j=1,\ldots,h$. An {\em elementary route} is a feasible route, i.e., a route that satisfies all the time window constraints as well as the vehicle capacity constraint, in which no customer is visited more than once and the total demand does not exceed the vehicle capacity. A \emph{q-route} is a feasible route over which customers are allowed to be visited more than once. A $\emph{$2$-cycle-free q-route}$ is a q-route that forbids cycles in the form of $(i, j, i)$ for $i,j \in V_0$.
\end{definition}
\noindent The goal of the JRSP is to select a set of elementary routes for all the vehicles and speeds over each arc of the elementary routes to minimize the total cost and respect the following constraints:
\begin{enumerate}
	\item 
	Each customer is served within its time window; 
 \item The total demands on each route do not exceed the vehicle capacity $Q$;
	\item 
The speed over each arc does not exceed the speed limit $u$.
\end{enumerate}
The cost of route $r$ with a speed vector $\vec{v}$, $c_r(\vec{v})$, is the summation of costs of all arcs over $r$. In particular, given a route $r=(i_0,i_1,\ldots,i_h,i_{h+1})$ and a speed vector $\vec{v}=(v_{i_{k-1}, i_{k}})_{k=1}^{h+1}$ over $r$, the route cost is calculated as follows:
\begin{equation} \label{eq:routecost}
c_r(\vec{v}) = \sum_{k=1}^{h+1} d_{i_{k-1}, i_k} f(v_{i_{k-1},i_k}).
\end{equation}

\subsection{A basic formulation for the JRSP}
We first introduce a naive set-partitioning formulation for the JRSP. Let $z_r$ be a binary variable indicating whether or not route $r$ is included in the optimal solution, and let $\alpha_{ir}$ be a parameter that denotes the number of times route $r$ visits customer $i\in V_0$. Then the JRSP can be formulated as follows:
\begin{subequations} \label{eq:sp0}
\begin{align} 
\min \ & \sum_{r\in \Omega_1} c_r z_r\\ 
\text{s.t.} \ & \sum_{r\in \Omega_1}\alpha_{ir}z_r = 1, \; i \in V_0 \\
& \sum_{r\in \Omega_1} z_r = K, \\
& z_r \in \{0, 1\}, \; r\in \Omega_1,
\end{align}
\end{subequations}
where $\Omega_1$ is the set of feasible routes and $c_r$ is the ``cost'' of route $r$. A caveat of this formulation is that the cost of a route $r$ also depends on the speed vector over the route, so $c_r$ is in fact not well defined given only the route information. To resolve this issue, we can define $c_r$ to be the minimum route cost over all feasible speed vectors. In particular, given a route $r=(i_0,i_1,\ldots,i_h,i_{h+1})$ with $i_0=i_{h+1}=0$, let variables $t_{i_l}$ denote the service start time at customer $i_l$ for $l=1,\ldots, h$ and variables $t_{i_0}$ and $t_{i_{h+1}}$ denote the time that the vehicle departs from and returns to the depot, respectively. Then $c_{r} $ is computed as follows.  
\begin{equation} \label{eq:cr}
\begin{split}
c_{r}\; = \; &\min c_r(\vec{v}) \\
\text{s.t.} \qquad &  t_{i_l} \ge t_{i_{l-1}} + \tau_{i_{l-1}} + \frac{d_{i_{l-1},i_l}}{v_{i_{l-1},i_l}}, l=1,\ldots, h,\\
& t_{i_{h+1}} \ge t_{i_h} + \tau_{i_h} + \frac{d_{i_h,0}}{v_{i_h,0}},\\
&a_{i_l} \le t_{i_l} \le b_{i_l}, l=1,\ldots,h+1,\\
& t_{i_0} = 0,\\
& v_F \le v_{i_{l-1}, i_l} \le u, l=1,\ldots,h+1,
\end{split}
\end{equation}
where the form of $c_r(\vec{v})$ is given in~\eqref{eq:routecost}, $a_{i_{h+1}}=a_0$, $b_{i_{h+1}}=b_0$, and $v_F$ is the speed that minimizes the fuel consumption rate function $f(v)$. Note that we made the assumption that the vehicle will never travel at a speed lower than $v_F$ without loss of generality, since otherwise the vehicle can instead travel at speed $v_F$ and wait, incurring a lower cost. 


\subsection{The proposed set partitioning formulation}
It is not difficult to observe that the pricing problem of~\eqref{eq:sp0} will be challenging to solve, since computing $c_r$ for a given route $r$ is already a non-trivial problem. To circumvent this difficulty, we develop a novel set partitioning formulation, in which each column represents a combination of a route and a set of speed values over the arcs of the route. This formulation contains much more columns than the naive set partitioning formulation~\eqref{eq:sp0}, but its advantage is that the corresponding cost coefficient in the objective for each column can be computed easily by solving several one-dimensional convex optimization problems. This is critical for an efficient labeling algorithm for the pricing problem.

To introduce this new set partitioning formulation, we first introduce the concept of an \emph{active} customer based on its actual service start time.
\begin{definition}
Given a route with a speed vector on the route, a customer $i_j$ is \emph{active}, if it is served at its earliest or latest available time, i.e., $a_{i_j}$ or $b_{i_j}$; we call $j$ an \emph{active index} on the route. Otherwise we call this customer a \emph{non-active} customer.
\end{definition}

\noindent Then we can characterize the optimal speeds over a given route $r$ with the following proposition.
\begin{proposition} \label{prop:sop:speed}
Any optimal speed vector for the speed optimization problem~\eqref{eq:cr} over a given route $r$ satisfies the following property: the speed is uniform between any two consecutive active customers, i.e., for any non-active customer, the speed on the arc entering the customer is equal to the speed on the arc emanating from that customer.
\end{proposition}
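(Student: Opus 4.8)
The plan is to prove the per-customer statement---that the entering and emanating speeds coincide at every non-active customer---by a local perturbation argument; the uniformity between consecutive active customers then follows by chaining this equality along any maximal run of non-active customers.

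First I would fix an optimal speed vector $\vec{v}$ with associated service-start times $t_{i_l}$, and pick a non-active customer $i_j$, so that $a_{i_j} < t_{i_j} < b_{i_j}$. Write $v_{\text{in}}=v_{i_{j-1},i_j}$ and $v_{\text{out}}=v_{i_j,i_{j+1}}$, and let $\delta_{\text{in}}=t_{i_j}-t_{i_{j-1}}-\tau_{i_{j-1}}$ and $\delta_{\text{out}}=t_{i_{j+1}}-t_{i_j}-\tau_{i_j}$ be the travel-time budgets on the two arcs adjacent to $i_j$. I would perturb only $t_{i_j}$ to $t_{i_j}+\epsilon$, keeping every other service time fixed; since $t_{i_j}$ is interior to its window, this stays feasible for small $|\epsilon|$, it leaves all constraints not involving $t_{i_j}$ untouched, and it changes the cost only on the two adjacent arcs, transferring budget between them while keeping $\delta_{\text{in}}+\delta_{\text{out}}$ constant.

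Next I would reduce this to a one-dimensional problem. Given a fixed budget $\delta$ on an arc of length $d$, the cost-minimizing speed is $\max\{v_F, d/\delta\}$ because $f$ is decreasing-then-increasing with minimum at $v_F$; let $\phi(\delta)=d\,f(\max\{v_F,d/\delta\})$ denote the resulting minimal arc cost. A short computation shows $\phi$ is convex and $C^1$, with $\phi'(\delta)=-v^2 f'(v)$ when $v=d/\delta>v_F$ and $\phi'(\delta)=0$ when the speed sits at $v_F$; the $C^1$ gluing at the junction $\delta=d/v_F$ is exactly where $f'(v_F)=0$ is used. At the global optimum the restriction to these two arcs must itself be optimal, so the total adjacent cost $C(\epsilon)=\phi_{\text{in}}(\delta_{\text{in}}+\epsilon)+\phi_{\text{out}}(\delta_{\text{out}}-\epsilon)$ is minimized at $\epsilon=0$. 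Taking the first-order condition $C'(0)=0$ yields $\phi_{\text{in}}'(\delta_{\text{in}})=\phi_{\text{out}}'(\delta_{\text{out}})$, i.e. $v_{\text{in}}^2 f'(v_{\text{in}})=v_{\text{out}}^2 f'(v_{\text{out}})$. To finish I would show the map $\psi(v)=v^2 f'(v)$ is strictly increasing on $[v_F,\infty)$: since $f$ is strictly convex with minimum at $v_F$ we have $f''>0$ and $f'\ge 0$ there, so $\psi'(v)=2v f'(v)+v^2 f''(v)>0$ for $v>v_F$. Hence $\psi(v_{\text{in}})=\psi(v_{\text{out}})$ forces $v_{\text{in}}=v_{\text{out}}$ (the degenerate case $v_{\text{in}}=v_{\text{out}}=v_F$, where both derivatives vanish, is also covered). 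Chaining this equality across the non-active customers strictly between two consecutive active customers gives the claimed uniform speed on that segment.

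The main obstacle is the speed bounds. The lower bound $v_F$ causes no trouble, since both perturbation directions remain feasible there; but when an adjacent speed is capped at the upper limit $u$, the budget on that arc cannot be decreased, so only a one-sided perturbation is available and $C'(0)=0$ must be replaced by the appropriate one-sided inequality. I expect the delicate part of the write-up to be verifying, via this one-sided condition together with the monotonicity of $\psi$, that a non-active customer cannot have one adjacent speed equal to $u$ while the other is strictly smaller---which is what is needed for uniformity to survive at the boundary of the speed range.
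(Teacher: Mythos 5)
Your argument is correct, but it takes a genuinely different route from the paper's. The paper also works locally at a non-active customer $i$ with neighbors $j$ and $k$, but it keeps the service times at $j$ and $k$ fixed and perturbs the two adjacent \emph{speeds} directly, setting $v'_{ji}=v_{ji}+d_{ik}\delta$ and $v'_{ik}=v_{ik}-d_{ji}\delta$ for small $\delta>0$ when $v_{ji}<v_{ik}$; a secant-slope inequality from strict convexity shows the cost strictly drops, and an explicit computation shows the total travel time over the two arcs also drops, so feasibility is preserved and optimality is contradicted. That argument needs nothing beyond strict convexity of $f$ and never touches the speed bounds, since the new speeds lie in the open interval $(v_{ji},v_{ik})$. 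You instead perturb the intermediate service time $t_{i_j}$, reduce each arc to the budget--cost function $\phi(\delta)=d\,f(\max\{v_F,d/\delta\})$, and equalize marginal costs, which yields the stationarity identity $v_{\mathrm{in}}^2 f'(v_{\mathrm{in}})=v_{\mathrm{out}}^2 f'(v_{\mathrm{out}})$ and then rests on strict monotonicity of $\psi(v)=v^2f'(v)$ on $[v_F,\infty)$ --- a fact the paper itself needs later, in the proof of Lemma~\ref{lem:H1}, so your proof makes the marginal-cost structure behind the dominance analysis explicit. Two repairs are needed to make your version airtight. First, you justify $\psi'>0$ via $f''>0$, but the paper assumes only that $f$ is strictly convex and differentiable; argue instead that on $[v_F,\infty)$ the map $\psi$ is the product of the positive, strictly increasing $v\mapsto v^2$ and the nonnegative, strictly increasing $f'$ (with $f'(v_F)=0$), which gives strict monotonicity without second derivatives. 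Second, the boundary case at $u$ that you defer does close as you anticipate: if the outgoing speed is capped at $u$ with no slack, only $\epsilon\le 0$ is feasible, the one-sided condition $C'(0^-)\le 0$ reads $\psi(v_{\mathrm{in}})\ge\psi(u)$, and since $v_{\mathrm{in}}\le u$ and $\psi$ is strictly increasing this forces $v_{\mathrm{in}}=u$; the case where the incoming speed is capped is symmetric. With those two fixes your proof is complete; the paper's perturbation is the more elementary and self-contained of the two, while yours requires more care at the speed bounds but exposes the quantity $v^2f'(v)$ that reappears in Section~\ref{sec:dominance}.
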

\begin{proof}
We prove the statement by contradiction. Suppose the speed optimization problem~\eqref{eq:cr} has an optimal speed vector $\vec{v}$ with three consecutive vertices $j,i,k$ such that customer $i$ is non-active ($t_i \in (a_i,b_i)$) and $v_{ji} \neq v_{ik}$. We show that we can create a new feasible speed vector with a strictly lower cost than the optimal speed vector. The new speed vector differs from the optimal speed vector only by the speeds on arcs $(j,i)$ and $(i,k)$.

Let the departure time at vertex $j$ be $T_j$ and the arriving time at vertex $k$ be $T_k$. We first assume that $v_{ji} < v_{ik} \le u$. The cost over arcs $(j,i)$ and $(i,k)$ is $d_{ji}f(v_{ji}) + d_{ik}f(v_{ik})$. We create two new speeds $v'_{ji}= v_{ji} +  d_{ik} \delta$ and $v'_{ik} = v_{ik} -  d_{ji} \delta$ with
\[ 0 < \delta <  \min \{\frac{v_{ik}-v_{ji}}{d_{ik}}, \frac{v_{ik}-v_{ji}}{d_{ji}}, \frac{v_{ik}^2 - v_{ji}^2}{v_{ji} d_{ik} + v_{ik} d_{ji}}\}.\]
We will show that with these two new speeds, the vehicle is able to depart vertex $j$ at time $T_j$, serve customer $i$ within its time window, arrive at node $k$ at time $T_k$, and incur a lower cost over arcs $(j,i)$ and $(i,k)$.

According to the choice of $\delta$, we have $v_{ji}', v_{ik}' \in (v_{ji}, v_{ik})$, so $v_{ji}'$ and $v_{ik}'$ are feasible speeds. Since the function $f(v)$ is strictly convex in $v$, then
\[\frac{f(v'_{ji}) - f(v_{ji})}{ d_{ik} \delta} =\frac{f(v_{ji}+d_{ik}\delta) - f(v_{ji})}{ d_{ik} \delta} <  \frac{f(v_{ik}) - f(v_{ik}-d_{ji}\delta)}{ d_{ji} \delta}=\frac{f(v_{ik}) - f(v'_{ik})}{ d_{ji} \delta}.\]

\noindent Thus $d_{ji}f(v'_{ji})  + d_{ik} f(v'_{ik})  <  d_{ji}f(v_{ji}) + d_{ik}f(v_{ik})$, which implies the new speeds incur less cost on the route. Now we show that with the new speeds, the vehicle is able to leave vertex $j$ at $T_j$ and arrive at vertex $k$ at time $T_k$. The original travel time over arcs $(j,i)$ and $(i,k)$ is
\[T =\frac{d_{ji}}{v_{ji}} + \frac{d_{ik}}{v_{ik}},\]
and the travel time with the new speeds is
\[T' =\frac{d_{ji}}{v_{ji} + d_{ik} \delta} + \frac{d_{ik}}{v_{ik} - d_{ji}\delta}.\]
Then
\begin{equation*}
\begin{split}
T' - T & =\frac{d_{ji}}{v_{ji} +  d_{ik}\delta} + \frac{d_{ik}}{v_{ik} -  d_{ji} \delta} -(\frac{d_{ji}}{v_{ji}} + \frac{d_{ik}}{v_{ik}}) \\
& =\frac{d_{ji}d_{ik}\delta [(v_{ji}d_{ik}+v_{ik}d_{ji}) \delta-(v_{ik}^2- v_{ji}^2)]}{v_{ik}v_{ji} (v_{ik}- d_{ji} \delta)(v_{ji} + d_{ik}\delta)} \\
& < 0.
\end{split}
\end{equation*}
The last inequality follows from the choice of $\delta$. Since the total travel time between vertex $j$ and vertex $k$ decreases with the new speeds, with sufficiently small $\delta$ the vehicle is able to satisfy the time window constraints for all customers while incurring a lower cost. The case with $v_{ji} > v_{ik}$ can be proven in a similar way. 
\end{proof}

\begin{remark}
According to Proposition~\ref{prop:sop:speed}, if we know two consecutive active customers on a given route and their service start times, then the optimal speeds over the arcs between the two customers can be calculated by searching one speed value such that the convex objective function is minimized and service time windows are respected for customers in between. This is essentially a \emph{one-dimensional convex optimization problem}. In other words, once we know two consecutive active customers, we can easily calculate the optimal speed over this segment of the route between the two customers, \emph{without the need of knowing how the whole route looks like}. This nice property motivates the new set partitioning formulation below, and forms the foundation of our efficient labeling algorithm in Section~\ref{sec:labeling} for the pricing problem.
\end{remark}

We now introduce the new set partitioning formulation. In this formulation, each column represents a triple $(r, I, \vec{s})$, where $r=(i_0,i_1,\ldots,i_h,i_{h+1})$ is a candidate route, $I$ is the set of active indices on route $r$, and $\vec{s} = (s_{j})_{j\in I}$ is a vector of service start times at active customers with $s_j\in \{a_{i_j},b_{i_j}\}$ for any $j\in I$. The cost $c_{r,I, \vec{s}}$ of a column in the formulation is defined as the minimum cost of route $r$ over any speed vector that guarantees the pattern $(I, \vec{s})$, i.e., customer $i_j$ is served at $s_j$ for all $j\in I$ with that speed vector. In particular, with the same notations in~\eqref{eq:cr}, the cost $c_{r,I,\vec{s}} $ is computed as follows.  
\begin{equation} \label{eq:optcost}
\begin{split}
c_{r,I,\vec{s}}\; = \; &\min c_r(\vec{v}) \\
\text{s.t.} \qquad &  t_{i_l} \ge t_{i_{l-1}} + \tau_{i_{l-1}} + \frac{d_{i_{l-1},i_l}}{v_{i_{l-1},i_l}}, l=1,\ldots, h,\\
& t_{i_{h+1}} \ge t_{i_h} + \tau_{i_h} + \frac{d_{i_h,0}}{v_{i_h,0}},\\
& a_{i_l} \le t_{i_l} \le b_{i_l}, l \notin I,\\
& t_{i_0} = 0, \\
& t_{i_j} = s_j, j\in I,\\
& v_F \le v_{i_{l-1}, i_l} \le u, l=1,\ldots,h+1.
\end{split}
\end{equation}
We set $c_{r,I,\vec{s}}= \infty$ if the cumulative demands on the route exceed the vehicle capacity or the optimization problem in~\eqref{eq:optcost} is infeasible. The difference between problem~\eqref{eq:cr} and problem~\eqref{eq:optcost} is that in~\eqref{eq:optcost} variables $t_{i_j}$ for $j\in I$ have fixed values, so~\eqref{eq:optcost} can be seen as a restriction of~\eqref{eq:cr}. According to Proposition~\ref{prop:sop:speed}, problem~\eqref{eq:optcost} can be decomposed into $|I|+1$ one-dimensional convex optimization problems.

Let $R$ be the set of all the routes in the network. Let $\Omega_2$ be the set of all such triples $(r,I,\vec{s})$, i.e., $\Omega_2=\{(r,I,\vec{s}) \mid I \text{ is a subset of indices of route } r, r\in R\}$. Note that $\Omega_2$ is a finite set. Let a binary variable $z_{r, I, \vec{s}}$ indicate whether or not pattern $(r, I, \vec{s})$, i.e., a route $r$ with active indices $I$ and the corresponding service start time vector $\vec{s}$, is included in the optimal solution. Our set partitioning formulation of the JRSP is stated as follows.
\begin{subequations} \label{eq:sp}
\begin{align}
\min \ & \sum_{r\in \Omega_2} c_{r,I,\vec{s}} z_{r,I,\vec{s}} \\
\text{s.t.} \ & \sum_{(r,I,\vec{s})\in \Omega_2} \alpha_{ir} z_{r,I,\vec{s}} =1, \; i\in V_0, \label{eq:sp:indegree}\\
 \ & \sum_{(r,I,\vec{s})\in \Omega_2} z_{r,I,\vec{s}} = K, \label{eq:sp:cardinality}\\
\ & z_{r,I,\vec{s}} \in \Ze_+, \; (r,I,\vec{s})\in \Omega_2.
\end{align}
\end{subequations}
It is clear that~\eqref{eq:sp} gives a valid formulation for the JRSP, since all possible sets of active customers are considered on any given route and only elementary routes give integer feasible solutions. We can also replace routes $r$ in the formulation by elementary routes,
$ng$-routes, $k$-cycle-free $q$-routes, or any set of routes that includes elementary routes, and formulation~\eqref{eq:sp} remains valid.

We solve formulation~\eqref{eq:sp} using a branch-and-bound algorithm, in which the linear programming relaxation at each node of the branch-and-bound tree is solved by column generation. The column generation master problem is a restricted linear program consisting of only a subset of columns in $\Omega_2$. Let the dual variables corresponding to constraints~\eqref{eq:sp:indegree} and~\eqref{eq:sp:cardinality} be $\mu_i$ for $i\in V_0$ and $\nu$, respectively, the \emph{pricing problem} for generating a new column to be added to the master problem is formulated as follows:
\begin{equation} \label{eq:pricing}
\min_{(r,I,\vec{s})\in \Omega_2} \bar{c}_{r,I,\vec{s}}:= c_{r,I,\vec{s}}-\sum_{i\in V_0}\mu_i a_{ir} - \nu.
\end{equation}
In the next section, we introduce an efficient labeling algorithm to solve the pricing problem~\eqref{eq:pricing}.

\section{The Labeling Algorithm} \label{sec:labeling}
We solve the pricing problem \eqref{eq:pricing} by a labeling algorithm, in which the optimal triple $(r, I, \vec{s})$ is generated dynamically. The general idea is as follows. When we extend a walk to a new customer $j$, it could be active or non-active. If customer $j$ is active, we add its index to set $I$; the optimal speed vector over the walk up to customer $j$ can be computed, regardless of how the walk is extended after customer $j$. If customer $j$ is non-active, let $i_w$ be the last known active customer on the walk. Then the optimal speeds over the arcs between customer $i_w$ and customer $j$ must be the same, according to Proposition~\ref{prop:sop:speed}, so that we can store the total fuel cost consumed between customer $i_w$ and customer $j$ using a one-dimensional convex function of the uniform speed between $i_w$ and $j$. This makes it very easy to compare two labels as we will show in Section~\ref{sec:dominance}.

Before formalizing the idea, we need to address a technical issue on whether the optimal speed of the one-dimensional convex optimization problem can be attained. Since the service start time at a non-active customer $j$ lies in an open interval, the set of speed values that makes $j$ non-active will be an open set. Then the optimal speed that minimizes the one-dimensional convex fuel cost function may not be attained. To address this issue, we introduce the concept of a \emph{seamless customer} to make the set of feasible speed values a closed set.
\begin{definition} \label{def:seamless}
Given a route with a speed vector on the route, a customer $j$ is \emph{seamless}, if the following two conditions hold: (1) The speed on the arc entering $j$ is equal to the speed on the arc emanating $j$; (2) There is no waiting at customer $j$.
\end{definition}

\noindent Given a route with an associated speed vector, the set of seamless customers contains the set of non-active customers. To see this, any non-active customer on a route with an optimal speed vector must be a seamless customer by Proposition~\ref{prop:sop:speed}. But the converse is not true in general, since a seamless customer is allowed to be served at the boundary point of its time window, i.e., being active. By introducing the concept of a seamless customer, every optimization problem arising in the pricing problem will be defined on a closed set and its optimal solution will be attained.

\subsection{Label definition}
We first define some notations related to a walk. Given a walk $P=(i_0, i_1,\ldots, i_h)$, let $q(P)=\sum_{k=1}^hq_{i_{k-1},i_k}$ be the cumulative demands delivered on $P$, $\tau_{i_l, i_m}(P) =\sum_{k=l}^m\tau_{i_k}$ be the total service time spent between customers $i_l$ and $i_m$ on $P$ (including customers $i_l$ and $i_m$), $D_{i_l,i_m}(P)=\sum_{k=l+1}^m d_{i_{k-1},i_k}$ be the total distance between customer $i_l$ and customer $i_m$, for $0\le l\le m \le h$. We write $\tau_{i_l, i_m}$ and $D_{i_l, i_m}$ instead of $\tau_{i_l, i_m}(P)$ and $D_{i_l, i_m}(P)$ when the context is clear.

We develop a forward labeling algorithm to solve the pricing problem~\eqref{eq:pricing}. A label $L = (\PL,w,s)$ is associated with: (i) a walk $\PL=(i_0,i_1,\ldots,i_h)$ with $i_0=0$ and $q(P)\le Q$; (ii) the index $w$ of the \emph{last known} active customer on $\PL$; (iii) the service start time $s$ of customer $i_w$, which is either $a_{i_w}$ or $b_{i_w}$.
Note that we only need to store the last known active index instead of all active indices, since the walk $P$ is generated dynamically. The optimal speeds over arcs before $i_w$ will be computed and stored during the generation of the walk. A label $L = (\PL,w,s)$ contains the following attributes:
\begin{itemize}
\item $M$, the set of forbidden vertices, i.e., vertices that cannot be visited directly after vertex $i_h$ along $\PL$. For example, if we consider only elementary routes in our set-partitioning formulation, then $M$ is the set of vertices in $P$; if we consider q-routes, then $M=\{i_h\}$.
\item $\mu(P)$, the sum of dual multipliers along $P$, i.e., $\mu(P)=\sum_{k=1}^h{\mu_{i_k}}$.
\item $q(P)$, the cumulative demands delivered along $P$, i.e., $q(P)=\sum_{k=1}^h{q_{i_k}}$.
\item $S_v$, a set of feasible speeds $v$ such that the time-window constraint of each customer between $i_w$ and vertex $i_h$ along $P$ is satisfied, by traveling at speed $v$ between $i_w$ and $i_h$ without any waiting in between. In particular,
\[S_v=\left\{v\in [l,u] \mid s+\tau_{i_w,i_{j-1}}+ \frac{D_{i_w,i_j}}{v} \in [a_{i_j},b_{i_j}], \forall j=w+1,\ldots,h\right\}.\]
\item $\Gamma$, the total service time spent on customers from $i_w$ to $i_h$ along $\PL$, i.e., $\Gamma =\tau_{i_w, i_h}$.
\item $D$, the total distance between vertex $i_w$ and vertex $i_h$ along $\PL$, i.e., $D=D_{i_w,i_h}$.
\item $F_{\sd}$, the optimal cost on arcs before customer $i_w$ along $\PL$. This quantity is dynamically updated when the last known active customer along $\PL$ is updated. The initialization and update of $F_{\sd}$ is explained in Section~\ref{subsec:labelext}.

\end{itemize}
The attributes $S_v$, $\Gamma$, $D$, and $F_{\sd}$ are different from the resources as in the resource constrained shortest path problem. 


\subsection{Label initialization and extension} \label{subsec:labelext}
From this point on, we will refer to labels and their attributes with superscripts whenever there is a need to differentiate between two labels. For example, given a label $L^f = (P^f,w^f,s^f)$ with superscript $f$, we will refer to its attributes as $M^f$, $S^f_v$, and so on. Given two walks $P$ and $P'$ with $P'$ starts at the ending vertex of $P$, define $P\oplus P'$ to be the walk obtained by concatenating $P'$ to $P$, that is, if $P=(i_1,\ldots, i_k)$ and $P'=(i_k,\ldots, i_l)$ then $P\oplus P' = (i_1,\ldots, i_l)$.

The initial label $L^0=(\PL^0,w^0,s^0)$ is created by setting $\PL^0=(0)$, $w^0=0$, and $s^0=0$. Table \ref{Label-table} illustrates the initial values of attributes associated with $L^0$. When a label $L=(\PL,w,s)$ with $\PL=(i_0,\ldots,i_h)$ is extended to a customer $j$, we set $j=i_{h+1}$ and create \emph{three new labels} depending on whether $j$ is active or seamless: $L^1 = (\PL\oplus (i_h, j), h+1, a_j)$ with customer $j$ being active and served at $a_j$, $L^2 = (\PL \oplus (i_h, j), h+1, b_j)$ with customer $j$ being active and served at $b_j$, and $L^3 = (\PL\oplus (i_h, j),w,s)$ with customer $j$ being seamless and the last known active customer remains to be $i_w$. Table \ref{Label-table} illustrates how the attributes of the three labels are updated according to the attributes of $L$. We discard the newly generated label, whenever the set $S_v$ of feasible speeds becomes empty or the optimization problem for computing the minimal cost $F_{\sd}$ during the label extension becomes infeasible.
\begin{table}[ht]
\begin{center}
\begin{tabular}{|c|c|c|c|c|}
\hline  &   & \multicolumn{3}{c|}{Label extension from $L$} \\
\cline{3-5}
        &  $L^0$ & $L^1$ & $L^2$ & $L^3$ \\ \hline
$M$  & $\emptyset$ & \multicolumn{3}{|c|}{See details below} \\
\hline
$\mu(P)$ & 0 & \multicolumn{3}{|c|}{$\mu(P)+\mu_j$} \\
\hline
$q(P)$ & 0 & \multicolumn{3}{|c|}{$q(P)+q_j$} \\
\hline
$S_v$ & $[l,u]$ & $[l,u]$ & $[l,u]$ & $S_v \cap \{v \mid a_j \le s+\tau_{i_w,i_h} + \frac{D_{i_w,j}}{v} \le b_j\}$\\
\hline
$\Gamma$ & 0  &\multicolumn{3}{|c|}{$\Gamma + \tau_j$}\\
\hline
$D$ & 0 &  \multicolumn{3}{|c|}{$D+ d_{i_h,j}$}\\
\hline
$F_{\sd}$ & 0 & $F_{\sd}+F^1$ & $F_{\sd}+F^2$ & $F_{\sd}$ \\
\hline
\end{tabular}
\caption{Label initialization and extension from a label $L$.}\label{Label-table}
\end{center}
\end{table}

The update of set $M$ in Table \ref{Label-table} depends on the choice of route $r$ in the set partitioning formulation: $M \gets M\cup \{j\}$ for elementary routes, $M \gets \{j\} $ for q-routes, and $M \gets \{i_{h}, j\}$ for 2-cycle-free q-routes. The entry $F^1$ (or $F^2$) in Table~\ref{Label-table} denotes the optimal cost when the vehicle leaves customer $i_w$ at time $s$, travels at some speed in $S_v$ to customer $j$ without any waiting in between, and serves customer $j$ at time $a_j$ (or $b_j$). The value of $F^1$ is calculated through solving a one-dimensional convex optimization problem, as illustrated below.
\begin{equation} \label{eq:hj3}
F^1 =\min \left\{D_{i_w, j}f(v) \mid v\in S_v, \; s + \tau_{i_w, i_h}  + \frac{D_{i_w, j}}{v} \le a_j\right\}.
\end{equation}
The feasible region of problem~\eqref{eq:hj3} is a closed interval $S_v\cap \left[\frac{D_{i_w, j}}{a_j - s - \tau_{i_w, i_h}}, +\infty \right)$, so the optimal solution of~\eqref{eq:hj3} must be attained at one of its endpoints, or $v_F$ if $v_F \in S_v$. The calculation of $F^2$ is similar to that of $F^1$:
	\begin{equation} \label{eq:hj4}
F^2=\min \left\{ D_{i_w,j}f(v) \mid  v\in S_v, \; s + \tau_{i_w, i_{h}}  + \frac{D_{i_w, j}}{v} = b_j\right\}. 
\end{equation}
Thus $F^2=D_{i_w,j}f\left(\frac{D_{i_w, j}}{b_j - s - \tau_{i_w, i_h}}\right)$ if $\frac{D_{i_w, j}}{b_j - s - \tau_{i_w, i_h}}\in S_v$, otherwise the label is discarded. We also discard a label whenever its attribute $q(P)$ exceeds the vehicle capacity $Q$.

\subsection{Label termination}
Label termination refers to the extension of a label $L$ to the depot, in which the vehicle leaves customer $i_w$ at time $s$ and travels at a uniform speed $v\in S_v$ along $P$ back to the depot. Let the minimum cost between $i_w$ and the depot along $\PL$ be $F_{\dpt}$. It can be computed through solving a one-dimensional convex optimization below.
\begin{equation} \label{eq:vfd}
F_{\dpt}=\min \left\{D_{i_w,0}f(v) \mid v\in S_v, \; s + \tau_{i_w,i_h} + \frac{D_{i_w,0}}{v} \le b_0\right\}.
\end{equation}
Label $L$ is discarded if~\eqref{eq:vfd} is infeasible. The total cost of a terminated label $L$ is
\begin{equation}
c_L := F_{\sd} + F_{\dpt}.
\end{equation}

\noindent The correctness of the labeling algorithm is shown by Proposition \ref{label-correctness} below.
\begin{proposition}\label{label-correctness}
Suppose a terminated label $L$ recursively defines a route $r$, a set $I$ of active indices over $r$, and the corresponding service start times $\vec{s}$ at the active customers. Then the cost $c_{r, I, \vec{s}}$ is given by $c_L$.
\end{proposition}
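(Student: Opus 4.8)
The plan is to argue by induction on the number of extensions used to build the label, tracking how the attribute $F_{\sd}$ accumulates the optimal cost of the part of the route already closed off by active customers, while the attributes $S_v$, $D$, and $\Gamma$ encode the single open segment trailing the last known active vertex $i_w$. The conceptual backbone is a decomposition principle: because Proposition~\ref{prop:sop:speed} forces the optimal speed to be uniform between consecutive active customers, fixing the service time of an active customer decouples problem~\eqref{eq:optcost} into independent subproblems, one per maximal segment lying between two consecutive active vertices, with the depot acting as the two extreme anchors (since $t_{i_0}=0$ is fixed and the return time is free up to $b_0$). Each such segment subproblem is exactly a one-dimensional convex program over a single uniform speed, and these are precisely the programs \eqref{eq:hj3}, \eqref{eq:hj4}, and \eqref{eq:vfd} solved during extension and termination. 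So at the top level the claim reduces to (a) additivity of \eqref{eq:optcost} across active vertices and (b) the identification of each segment's value with the corresponding $F^1$, $F^2$, or $F_{\dpt}$.

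For the induction I would maintain the invariant that, for any (not necessarily terminated) label $L=(\PL,w,s)$ with $\PL=(i_0,\dots,i_h)$, the quantity $F_{\sd}$ equals the optimal value of the restriction of \eqref{eq:optcost} to the prefix sub-route $(i_0,\dots,i_w)$ under the active pattern recorded so far, with $i_w$ served at $s$; and that $S_v$ is exactly the set of uniform speeds that move the vehicle from $i_w$ through $i_{w+1},\dots,i_h$ with no waiting while respecting every intermediate time window, with $D=D_{i_w,i_h}$ and $\Gamma=\tau_{i_w,i_h}$ bookkeeping the segment's distance and service time. The base case $L^0$ is immediate, since the prefix is the single depot vertex and the trailing segment is empty. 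For a seamless extension $L^3$ the last active vertex and hence $F_{\sd}$ are unchanged while the trailing segment is lengthened; the invariant on $S_v$ is preserved exactly by the intersection rule in Table~\ref{Label-table}, and Definition~\ref{def:seamless} guarantees that this intersection keeps the feasible speed set closed so the later minimizations attain their optima. For an active extension $L^1$ or $L^2$ the trailing segment is closed at $j$: the new prefix cost is the old $F_{\sd}$ plus the closed-segment cost, which is precisely $F^1$ from \eqref{eq:hj3} (service at $a_j$, arrival allowed to be early) or $F^2$ from \eqref{eq:hj4} (service at $b_j$, arrival forced to equal $b_j$), and the trailing attributes reset. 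Termination is the same move with the depot as the next active anchor, so $F_{\dpt}$ from \eqref{eq:vfd} computes the final segment and $c_L=F_{\sd}+F_{\dpt}$ equals the full value of \eqref{eq:optcost} after a last invocation of additivity across $i_w$.

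The main obstacle is the additivity-plus-uniformity step, that is, proving that each closed-segment value is genuinely realized by a single uniform speed and that summing these values reproduces the global optimum of \eqref{eq:optcost}. Additivity itself is the easy half: since the objective \eqref{eq:routecost} is separable over arcs and the time-coupling between consecutive customers is severed the moment an active customer's service time is pinned to $s_j$, the prefix and suffix optimizations are independent. The delicate half is uniformity within a segment, where one must invoke Proposition~\ref{prop:sop:speed} to conclude that, over a segment whose interior customers are non-active, the optimal speed is constant, so the segment's contribution collapses to the single-variable programs \eqref{eq:hj3}--\eqref{eq:vfd}; the seamless notion of Definition~\ref{def:seamless} is what lets these one-dimensional minima be taken over closed intervals and hence be attained, at an endpoint of $S_v$ or at $v_F$. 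I expect the subtle point requiring the most care to be the interaction between the pattern's designated non-active customers and the possibility that an optimal speed vector serves such a customer exactly at a window boundary; one has to argue that, for the labels the algorithm actually retains, restricting each segment delimited by the active set to a single uniform speed does not sacrifice optimality, which is exactly the content guaranteed by Proposition~\ref{prop:sop:speed} together with the seamless construction.
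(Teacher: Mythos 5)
Your proposal is correct and follows essentially the same route as the paper's own (much terser) argument: decompose problem~\eqref{eq:optcost} across the active customers using Proposition~\ref{prop:sop:speed}, identify each resulting segment with the one-dimensional convex programs \eqref{eq:hj3}, \eqref{eq:hj4}, and \eqref{eq:vfd}, and conclude $c_L = F_{\sd} + F_{\dpt} = c_{r,I,\vec{s}}$. Your inductive invariant on $F_{\sd}$, $S_v$, $D$, and $\Gamma$ simply makes explicit what the paper summarizes as ``the way each label is extended and terminated guarantees the optimal speed over each arc is computed correctly,'' and you correctly flag the same subtlety (boundary service of non-active customers, handled via the seamless notion) that the paper relies on implicitly.
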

\begin{proof}
According to Proposition \ref{prop:sop:speed}, given a route and an optimal speed vector over the route, a customer is either active or seamless. The optimal speed at which the vehicle travels from active customer $i$ to active customer $j$, through a sequence of seamless customers, can be computed independent of the speed decisions before customer $i$ and after customer $j$. The way each label is extended and terminated guarantees the optimal speed over each arc is computed correctly, so we have $c_{r,I,\vec{s}} =c_L$. 
\end{proof}

\section{Dominance Rules} \label{sec:dominance}
The efficiency of any labeling algorithm heavily relies on dominance rules,
which allows one to discard a significant number of (potentially
exponentially many) labels. Without such dominance rules, the
implementation of most pricing algorithms would be impractical.
We devote this section to new dominance rules that can be applied
to the labels defined in Section~\ref{sec:labeling}. Intuitively, label 1 is dominated by label 2 if any possible extension of label 1 cannot lead to a triple $(r, I, \vec{s})$ with a smaller reduced cost than some extension of label 2. We formalize below what we mean by ``possible extension'' and ``lead to.''

We first introduce two new notations for label $L^f=(P^f, w^f, s^f)$. Let $i^f$ denote the last vertex of $P^f$. Let $\bar{v}^f$ denote the optimal speed vector over arcs from the depot to $i_{w^f}$ along $P^f$, obtained during the update of $F_{\sd}$. In other words, the component of $\bar{v}^f$ is computed by recursively solving the optimization problem~\eqref{eq:hj3} or~\eqref{eq:hj4} during the generation of label $L^f$.

\begin{definition}
Let $P$ be a walk and $v_P$ be a speed vector over arcs of $P$. Given a label $L^f$, the pair $(P, v_P)$ is a \emph{feasible extension} of $L^f$, if the following conditions hold:
\begin{itemize}
	\item The walk $P$ starts at vertex $i^f$, $P^f\oplus P$ is a route, and the total demands on $P^f\oplus P$ do not exceed the vehicle capacity.
	\item There exists $v\in S_v^f$ such that $(\bar{v}^f, v, v_P)$ is a feasible speed vector over the route $P^f \oplus P$. That is, a vehicle can travel from the depot to vertex $i_{w^f}$ at speed $\bar{v}^f$, travel from $i_{w^f}$ to $i^f$ at constant speed $v$, travel from $i^f$ to the depot along $P$ at speed $v_P$, and the time-window constraints of all the customers on $P^f \oplus P$ are satisfied.
\end{itemize}
\end{definition}
\noindent Note that if $(P, v_P)$ is a feasible extension of the label $L^f$, then the set of active customers on route $P^f \oplus P$ with the speed vector $(\bar{v}^f, v, v_P)$ for some $v\in S_v^f$ will be consistent with those in label $L^f$. Let $E(L^f)$ denote the set of all possible feasible extensions of $L^f$.
\begin{definition} \label{prop:domsimple}
A label $L^2=(P^2, w^2, S_v^2)$ is a \emph{dominated} label (so it can be discarded), if
there exists a label $L^1=(P^1,w^1,S_v^1)$ such that the following conditions hold:
\begin{enumerate}
  \item $E(L^2) \subseteq E(L^1)$.
  \item For any $(P, v_P) \in E(L^2)$ and $v\in S_v^2$ such that $(\bar{v}^2, v, v_P)$ is a feasible speed vector over $P^2 \oplus P$, let $I^2$ be the set of active customers on $P^2 \oplus P$ with this speed vector, and let $\vec{s}^2$ be the vector of the corresponding service start times at these customers. There exists a set $I^3$ of active customers on $P^1\oplus P$ with the corresponding service start times $\vec{s}^3$ such that $\bar{c}_{P^1 \oplus P, I^3, \vec{s}^3} < \bar{c}_{P^2\oplus P, I^2, \vec{s}^2}$.	
\end{enumerate}
\end{definition}
\noindent Definition~\ref{prop:domsimple} does not state that $L^2$ is dominated by $L^1$. Rather, it states that $L^2$ is dominated by some label $L^3$ with $P^3 = P^1$ through comparing $L^1$ and $L^2$. Indeed the conditions in Definition~\ref{prop:domsimple} cannot guarantee that extending $L^1$ with speed vector $\bar{v}^1$ always leads to a column with a better reduced cost than any extension of $L^2$. This is another feature that is different from the usual domination conditions for regular labels that are defined just by routes. The existence of such a label $L^1$ in Definition~\ref{prop:domsimple}, however, is in general hard to assert unless we have already generated such a label $L^1$ and can check $L^2$ against it. We will replace the conditions in Definition~\ref{prop:domsimple} by a set of
sufficient conditions that are easier to check.
 
We first introduce two functions needed for the sufficient conditions. Given a label $L=(P, w, s)$, let $T: S \rightarrow \Re$ be the function that computes the service finish time at the last vertex $i_h$ when the vehicle travels at speed $v$ between vertex $i_w$ and vertex $i_h$ along $\PL$. Specifically,
\begin{equation} \label{eq:T}
T(v)=s+\Gamma + \frac{D}{v}.
\end{equation}
Let $C: S \rightarrow \Re$ be the function that computes the total cost up to vertex $i_h$ along $P$ at a uniform speed $v$ between vertex $i_w$ and vertex $i_h$. Specifically,
\begin{equation} \label{eq:C}
C(v)= F_{\sd} + D \cdot f(v).
\end{equation}

\noindent We propose a new dominance rule as follows.
\begin{proposition} \label{prop:dom}
The label $L^2=(P^2,w^2,S_v^2)$ is a dominated label, if there exists a label $L^1=(P^1,w^1,S_v^1)$ such that the following conditions hold:
\begin{enumerate}
  \item $i^1=i^2$; \label{prop:dom:lastvx} \label{prop:dom:endpoint}
  \item $M^1 \subseteq M^2$; \label{prop:dom:allowed}
  \item $q(P^2) \geq q(P^1)$; \label{prop:dom:load}
  \item For any $v_2 \in S_v^2$, there exists $v_1 \in S_v^1$ such that \label{prop:dom:speed}
    \begin{equation}T^1(v_1) \le T^2(v_2)\tag{\ref{prop:dom:speed}-a}
    \label{eqn:dom:speed:time}
    \end{equation}
    and 
     \begin{equation}
		C^1(v_1)  - \mu( P^1 )  < C^2(v_2) - \mu(P^2).
      \tag{\ref{prop:dom:speed}-b}
      \label{eqn:dom:speed:cost}
      \end{equation}	
\end{enumerate}
\end{proposition}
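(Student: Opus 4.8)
The plan is to show that the easily-checkable conditions~\ref{prop:dom:lastvx}--\ref{prop:dom:speed} of Proposition~\ref{prop:dom} imply the two abstract requirements in the definition of a dominated label (Definition~\ref{prop:domsimple}), taking as the witness $L^1$ exactly the label furnished by the hypothesis. Throughout I would rely on the reduced-cost decomposition
\[
\bar c_{P^j\oplus P,\,I,\,\vec s}\;=\;c_{P^j\oplus P,\,I,\,\vec s}\;-\;\mu(P^j)\;-\;\mu(P)\;-\;\nu,\qquad j\in\{1,2\},
\]
which is valid because $\mu(P^j\oplus P)=\mu(P^j)+\mu(P)$ (the shared vertex $i^j$ is excluded from $\mu(P)$ by definition), together with the fact that, for a fixed speed vector $v_P$ on the arcs of $P$, the fuel cost $g(v_P)$ of traversing $P$ depends only on $v_P$ and not on the time at which the vehicle reaches $i^f=i^1=i^2$.

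First I would verify condition~(1) of Definition~\ref{prop:domsimple}, namely $E(L^2)\subseteq E(L^1)$. Let $(P,v_P)\in E(L^2)$. Since $i^1=i^2$ (condition~\ref{prop:dom:lastvx}), $P$ also starts at $i^1$; since $M^1\subseteq M^2$ (condition~\ref{prop:dom:allowed}), $L^1$ forbids no more vertices than $L^2$, so any $P$ for which $P^2\oplus P$ is a valid route also makes $P^1\oplus P$ a valid route; and since $q(P^1)\le q(P^2)$ (condition~\ref{prop:dom:load}), the capacity bound $q(P^1\oplus P)=q(P^1)+q(P)\le q(P^2)+q(P)\le Q$ holds. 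For the speed requirement, pick $v_2\in S_v^2$ witnessing feasibility of the extension for $L^2$; by condition~\ref{prop:dom:speed} there is $v_1\in S_v^1$ with $T^1(v_1)\le T^2(v_2)$, so under $(\bar v^1,v_1,v_P)$ the vehicle reaches $i^f$ no later than in the $L^2$-realization and then follows $P$ at the \emph{same} speeds $v_P$, reaching every downstream customer no later than before. Upper time-window bounds are thus preserved, and any lower bound now undershot can be met by waiting, so all windows remain satisfiable and $(P,v_P)\in E(L^1)$.

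The crux is condition~(2) of Definition~\ref{prop:domsimple}. Fix $(P,v_P)\in E(L^2)$, a witnessing $v_2\in S_v^2$, and the induced pattern $(I^2,\vec s^2)$. The key step is to decompose the minimum cost $c_{P^2\oplus P,I^2,\vec s^2}$ at the \emph{last active customer} $i_{w^2}$ of $P^2$ rather than at the junction $i^f$: because $L^2$ commits every customer between $i_{w^2}$ and $i^f$ to be seamless (non-active), $i_{w^2}$ is genuinely the last active customer of the prefix, its service time $s^2$ is fixed, and the cost from the depot to $i_{w^2}$ decouples and equals the stored value $F_{\sd}^2$. By Proposition~\ref{prop:sop:speed} the remainder is traversed at some optimal uniform junction speed $v_2^\ast\in S_v^2$ from $i_{w^2}$ to $i^f$ followed by optimal downstream speeds $v_P^\ast$, giving the exact value $c_{P^2\oplus P,I^2,\vec s^2}=C^2(v_2^\ast)+g(v_P^\ast)$. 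Applying condition~\ref{prop:dom:speed} at $v_2^\ast$ yields $v_1\in S_v^1$ with $T^1(v_1)\le T^2(v_2^\ast)$ and, by \eqref{eqn:dom:speed:cost}, $C^1(v_1)-\mu(P^1)<C^2(v_2^\ast)-\mu(P^2)$. I would then exhibit the speed vector $(\bar v^1,v_1,v_P^\ast)$ over $P^1\oplus P$, which is feasible exactly as in the previous paragraph (earlier arrival at $i^f$, same downstream speeds $v_P^\ast$, waiting where needed) and has fuel cost $C^1(v_1)+g(v_P^\ast)$. Letting $(I^3,\vec s^3)$ be the pattern this vector induces, the minimum-cost definition of $c_{\cdot}$ gives $c_{P^1\oplus P,I^3,\vec s^3}\le C^1(v_1)+g(v_P^\ast)$; subtracting the two reduced costs and invoking the strict inequality at $v_2^\ast$ yields $\bar c_{P^1\oplus P,I^3,\vec s^3}<\bar c_{P^2\oplus P,I^2,\vec s^2}$, as required.

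I expect the main obstacle to be the cost decomposition in the last paragraph, because the junction vertex $i^f$ may be seamless in the optimal $L^2$ speed vector, in which case the uniform-speed segment straddles $P^2$ and $P$ and no clean split occurs at $i^f$ itself. The resolution is to split at $i_{w^2}$, where activeness forces the prefix to decouple and pins its cost to $F_{\sd}^2$, and to lean on the fact that $c_{r,I,\vec s}$ is defined as a \emph{minimum} over speed vectors: this lets me compare an exact value on the $L^2$ side against a feasible but not necessarily optimal speed vector on the $L^1$ side, so the patterns $I^2$ and $I^3$ need not coincide. A secondary point to argue carefully is that arriving at $i^f$ strictly earlier never destroys feasibility of reusing $v_P^\ast$, which rests on permitting waiting at downstream customers and on waiting leaving the fuel cost $g(v_P^\ast)$ unchanged.
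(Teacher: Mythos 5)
Your proposal is correct and follows essentially the same route as the paper's own proof: establish $E(L^2)\subseteq E(L^1)$ from conditions \ref{prop:dom:lastvx}--\ref{prop:dom:load} and \eqref{eqn:dom:speed:time}, take the optimal speed vector consistent with $L^2$ with junction speed $v_2^*$, build a feasible traversal of $P^1\oplus P$ that reuses the downstream speeds and waits at the junction vertex until time $T^2(v_2^*)$, bound $c_{P^1\oplus P, I^3,\vec{s}^3}$ by the cost of that traversal, and finish with the reduced-cost arithmetic and \eqref{eqn:dom:speed:cost}. Your explicit handling of the seamless-junction issue by splitting the cost at $i_{w^2}$ is exactly what the paper does implicitly through the definition $C(v)=F_{\sd}+D\cdot f(v)$.
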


\begin{proof}
See the appendix.
\end{proof}

We discuss in detail how to check the dominance rule in Proposition~\ref{prop:dom}. Given two labels $L^1$ and $L^2$ that end at the same vertex, conditions \ref{prop:dom:endpoint}--\ref{prop:dom:load} are easy to check. We divide checking if condition \ref{prop:dom:speed} holds into four cases. To simplify notation, let the last vertex in both labels be $j$, the index of vertex $j$ on $P^1$ be $h^1$, and the index of vertex $j$ on $P^2$ be $h^2$.
\subsection{Case 1: $w^1=h^1$ and $w^2 = h^2$}\label{case1}
In this case, vertex $j$ is the last active vertex on both $P^1$ and $P^2$. Therefore, $T^1(v)=s^1+\tau_j$, $T^2(v)=s^2+\tau_j$, $C^1(v)=F^1_{\sd}$, and $C^2(v)=F^2_{\sd}$. Condition \ref{prop:dom:speed} is satisfied if and only if $s^1\le s^2$ and $F^1_{\sd} < F^2_{\sd}$.

\subsection{Case 2: $w^1 < h^1$ and $w^2=h^2$}\label{case2}
The service start time $s^2$ at $j$ in label $L^2$ can only be $a_j$ or $b_j$, and $C^2(v)=F^2_{\sd}$. 
\begin{itemize}
	\item If $s^2=a_j$, considering condition \eqref{eqn:dom:speed:time} we must have
$T^1(v_1) = a_j+\tau_j$, which uniquely determines a value $v_1$. If $v_1\in S_v^1$, then we check if \eqref{eqn:dom:speed:cost} holds by comparing two numbers.
	\item If $s^2=b_j$, then any $v_1\in S_v^1$ satisfies that $T^1(v_1) \le T^2(v_2) = b_j + \tau_j$. We then check if there exists $v_1\in S_v^1$ such that \eqref{eqn:dom:speed:cost} is satisfied, which is equivalent to check if the following inequality holds:
\[\min_{v_1\in S_v^1}C^1(v_1)-\mu( P^1 )  < F^2_{\sd} - \mu( P^2 ).\]

\noindent The term $\min_{v_1\in S_v^1}C^1(v_1)$ can be computed by minimizing a one-dimensional convex function $C^1(v_1)$ over the closed interval $S_v^1$.
\end{itemize}

\subsection{Case 3: $w^1= h^1$ and $w^2 < h^2$}\label{case3}
The service start time $s^1$ at $j$ in label $L^1$ can only be $a_j$ or $b_j$, and $C^1(v)= F^1_{\sd}$.
\begin{itemize}
	\item If $s^1 = b_j$, then $T^1(v_1) = b_j + \tau_j \geq T^2(v_2)$ for any $v_2\in S_v^2$.
Condition \eqref{eqn:dom:speed:time} holds only if $T^1(v_1) =b_j + \tau_j = T^2(v_2)$ for all $v_2 \in S_v^2$. It implies that $S_v^2$ must contain only a single element. Thus it is easy to check if \eqref{eqn:dom:speed:cost} holds.
	\item If $s^1=a_j$, then $T^1(v_1) = a_j + \tau_j \le T^2(v_2)$ for any $v_2\in S_v^2$.
We then check if for all $v_2\in S_v^2$ condition \eqref{eqn:dom:speed:cost} is satisfied, which is equivalent to check if the following inequality holds:
\[F^1_{\sd} - \mu( P^1 ) < \min_{v_2\in S_v^2}C^2(v_2) - \mu( P^2 ).\]
The term $\min_{v_2\in S_v^2}C^2(v_2)$ can be computed by minimizing a one-dimensional convex function $C^2(v_2)$ over the closed interval $S_v^2$.
\end{itemize}

\subsection{Case 4: $w^1 < h^1$ and $w^2 < h^2$}\label{case4}
This is the most tricky case. We first assume $S_v^1=[v^{\min}_1, v^{\max}_1]$ and $S_v^2=[v^{\min}_2, v^{\max}_2]$. The following proposition shows checking if condition~\eqref{eqn:dom:speed:time} holds can be done by comparing two numbers.
\begin{proposition} \label{prop:condition4a:equiv}
Condition~\eqref{eqn:dom:speed:time} in Proposition~\ref{prop:dom} holds if and only if $T^1(v^{\max}_1) \le T^2(v^{\max}_2)$.
\end{proposition}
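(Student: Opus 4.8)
The plan is to exploit the monotonicity of the time function. In Case 4 we have $w^1<h^1$ and $w^2<h^2$, so each label contains at least one arc between its last known active customer and its endpoint; hence $D^1,D^2>0$ and, by~\eqref{eq:T}, both $T^1(v)=s^1+\Gamma^1+D^1/v$ and $T^2(v)=s^2+\Gamma^2+D^2/v$ are strictly decreasing in $v$. This single fact lets me resolve both quantifiers in condition~\eqref{eqn:dom:speed:time} by passing to the endpoints of the intervals $S_v^1=[v^{\min}_1,v^{\max}_1]$ and $S_v^2=[v^{\min}_2,v^{\max}_2]$.

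First I would eliminate the inner existential. Condition~\eqref{eqn:dom:speed:time} requires that for every $v_2\in S_v^2$ there exist $v_1\in S_v^1$ with $T^1(v_1)\le T^2(v_2)$. For a fixed $v_2$, the inequality is easiest to satisfy by making $T^1(v_1)$ as small as possible; since $T^1$ is strictly decreasing on the closed interval $S_v^1$, its minimum is attained at the right endpoint, $\min_{v_1\in S_v^1}T^1(v_1)=T^1(v^{\max}_1)$. Thus the inner existential holds for a given $v_2$ if and only if $T^1(v^{\max}_1)\le T^2(v_2)$, and condition~\eqref{eqn:dom:speed:time} is equivalent to demanding $T^1(v^{\max}_1)\le T^2(v_2)$ for all $v_2\in S_v^2$. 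Next I would eliminate the remaining universal quantifier: this inequality holds for every $v_2\in S_v^2$ precisely when it holds at the value of $v_2$ minimizing $T^2$, which, again by strict monotonicity, is $v_2=v^{\max}_2$. The condition therefore collapses to the single comparison $T^1(v^{\max}_1)\le T^2(v^{\max}_2)$, and reading the chain of equivalences in both directions yields the claimed ``if and only if.''

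I do not expect a genuine obstacle here; the argument is a short monotonicity computation rather than anything delicate. The one point that requires care is the interplay between the quantifier order and the direction of monotonicity: the inner ``there exists $v_1$'' wants the \emph{smallest} attainable value of $T^1$, while the outer ``for all $v_2$'' is binding at the \emph{smallest} attainable value of $T^2$, so both reductions happen to land on the upper speed endpoint $v^{\max}$. I would state explicitly that $D^1,D^2>0$ in Case 4 guarantees strict monotonicity so that these endpoint minimizers are the relevant extremes; in the degenerate situation $D=0$ (which does not occur in this case) $T$ would be constant and the equivalence would hold trivially.
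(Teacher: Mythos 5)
Your proof is correct and follows essentially the same route as the paper's: both arguments rest on the monotonic decrease of $T^1$ and $T^2$ in $v$, reducing the inner existential to the endpoint $v^{\max}_1$ and the outer universal to the endpoint $v^{\max}_2$. The only cosmetic difference is that you eliminate the quantifiers one at a time while the paper verifies the two implications directly; the content is identical.
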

\begin{proof}
Note that $T^1$ and $T^2$ are both monotonically decreasing. Suppose for any $v_2\in [v^{\min}_2, v^{\max}_2]$, there exists $v_1 \in [v^{\min}_1, v^{\max}_1]$ such that $T^1(v_1) \le T^2(v_2)$. Then for $v^{\max}_2$, there exists $v_1 \in [v^{\min}_1, v^{\max}_1]$ such that $T^1(v_1) \le T^2(v^{\max}_2)$. Thus $T^1(v^{\max}_1) \le T^1(v_1) \le T^2(v^{\max}_2)$. On the other hand, if $T^1(v^{\max}_1) \le T^2(v^{\max}_2)$, then for any $v_2 \in [v^{\min}_2, v^{\max}_2]$, $v^{\max}_1 \in [v^{\min}_1, v^{\max}_1]$ satisfies that $T^1(v^{\max}_1) \le T^2(v^{\max}_2) \le T^2(v_2)$.
\end{proof}

To check condition~\eqref{eqn:dom:speed:cost} in Proposition~\ref{prop:dom}, we need the following definitions.
\begin{definition} $\;$
\begin{itemize}
	\item Define  
	\[\beta(v) = \frac{D^1}{T^2(v) - s^1 - \Gamma^1}= \frac{D^1}{D^2/v + s^2 + \Gamma^2 -  s^1 - \Gamma^1} = \frac{D^1}{D^2/v + \delta},\]
where $\delta =  s^2 + \Gamma^2 - s^1 - \Gamma^1$ is a constant.
	\item When $\delta \neq 0$, define $v^* = (D^1 - D^2)/\delta$. We say $v^*$ is well defined if $\delta \neq 0$.
	\item Define $H(v)= C^1(\beta(v)) - C^2(v)$.
\end{itemize}
\end{definition}
\begin{remark}
The function $\beta$ computes the speed at which the vehicle travels from $i_{w^1}$ to $j$ along $P^1$ and arrives at customer $j$ at the same time as the vehicle travels from $i_{w^2}$ along $P^2$ at speed $v$. The speed $v^*$ is the only value such that $\beta(v)=v$ when $\delta\neq 0$. The function $H$ captures the cost difference between two labels when the vehicle travels at speed $v$ through $P^2$ and travels at speed $\beta(v)$ through $P^1$ (thus arriving at customer $j$ at the same time).   
\end{remark}

\noindent Then condition~\eqref{eqn:dom:speed:cost} in Proposition~\ref{prop:dom} can be checked by simply comparing numbers based on the result below.
\begin{proposition}
Suppose that condition~\eqref{eqn:dom:speed:time} holds, i.e., $T^1(v^{\max}_1) \le T^2(v^{\max}_2)$. Condition~\eqref{eqn:dom:speed:cost} in Proposition~\ref{prop:dom} holds if and only if $z^* - \mu( P^1 ) + \mu(P^2) < 0$, where the value of $z^*$ is calculated as follows.
\begin{enumerate}
	\item If $T^1(v^{\min}_1) \le T^2(v^{\max}_2)$, then $z^*=C^1(v^{\min}_1) - C^2(v^{\min}_2)$.
	\item If $T^2(v^{\max}_2) < T^1(v^{\min}_1) \le T^2(v^{\min}_2)$,	
	\begin{itemize}
		\item and if $v^*$ is well-defined and $v^* \in [D^2 v^{\min}_1/(D^1 - \delta v^{\min}_1), v^{\max}_2]$, then $z^*=\max\{C^1(v^{\min}_1) -  C^2(v^{\min}_2), H(v^{\max}_2), H(v^*)\}$.
		\item and if $v^*$ is not well-defined or $v^* \notin [D^2 v^{\min}_1/(D^1 - \delta v^{\min}_1), v^{\max}_2]$, then $z^*=\max\{C^1(v^{\min}_1) -  C^2(v^{\min}_2), H(v^{\max}_2)\}$.
	\end{itemize}		
	\item If $T^2(v^{\min}_2) < T^1(v^{\min}_1)$,	
	\begin{itemize}
		\item and if $v^*$ is well-defined and $v^*\in [v^{\min}_2, v^{\max}_2]$, then $z^*=\max\{H(v^{\min}_2), H(v^{\max}_2), H(v^*)\}$.
		\item and if $v^*$ is not well-defined or $v^*\notin [v^{\min}_2, v^{\max}_2]$, then $z^*=\max\{H(v^{\min}_2), H(v^{\max}_2)\}$. 
	\end{itemize}	
\end{enumerate}
\end{proposition}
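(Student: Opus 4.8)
The plan is to show that the quantity $z^*$ defined by the case analysis equals $\max_{v_2\in S_v^2} g(v_2)$, where
\[ g(v_2) \;=\; \min\bigl\{\,C^1(v_1) : v_1\in S_v^1,\ T^1(v_1)\le T^2(v_2)\,\bigr\}\;-\;C^2(v_2), \]
and then to observe that, given the time part~\eqref{eqn:dom:speed:time} is satisfiable (i.e.\ $T^1(v^{\max}_1)\le T^2(v^{\max}_2)$ by Proposition~\ref{prop:condition4a:equiv}), condition~\ref{prop:dom:speed} asks exactly that for every $v_2\in S_v^2$ there is an admissible $v_1$ with $C^1(v_1)-C^2(v_2)<\mu(P^1)-\mu(P^2)$; the best such $v_1$ achieves $g(v_2)+C^2(v_2)$, so this holds iff $g(v_2)<\mu(P^1)-\mu(P^2)$. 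Since $g$ is continuous on the compact set $S_v^2$ its maximum is attained, so condition~\ref{prop:dom:speed} holds iff $\max_{v_2}g(v_2)<\mu(P^1)-\mu(P^2)$, i.e.\ iff $z^*-\mu(P^1)+\mu(P^2)<0$. First I would simplify $g$: because $T^1$ is strictly decreasing, $T^1(v_1)\le T^2(v_2)$ is equivalent to $v_1\ge\beta(v_2)$, and because all feasible speeds lie in $[v_F,u]$ the function $C^1(v)=F^1_{\sd}+D^1 f(v)$ is nondecreasing; hence the inner minimizer is the smallest admissible speed $\max\{v^{\min}_1,\beta(v_2)\}$. The hypothesis $T^1(v^{\max}_1)\le T^2(v^{\max}_2)$ gives $\beta(v_2)\le v^{\max}_1$, so this set is nonempty for every $v_2$.

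Next I would split $S_v^2$ into $J_A=\{v_2:\beta(v_2)\le v^{\min}_1\}$ and $J_B=\{v_2:\beta(v_2)>v^{\min}_1\}$, separated by the speed $\hat v_2$ solving $T^2(\hat v_2)=T^1(v^{\min}_1)$; a direct computation gives $\hat v_2=D^2 v^{\min}_1/(D^1-\delta v^{\min}_1)$, the left endpoint appearing in Case~2. On $J_A$ the minimizer is $v^{\min}_1$, so $g(v_2)=C^1(v^{\min}_1)-C^2(v_2)$, which (as $C^2$ is nondecreasing) is maximized at the smallest such $v_2$, giving $C^1(v^{\min}_1)-C^2(v^{\min}_2)$ whenever $v^{\min}_2\in J_A$. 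On $J_B$ the minimizer is $\beta(v_2)$ and $g(v_2)=C^1(\beta(v_2))-C^2(v_2)=H(v_2)$. The three cases of the proposition are precisely the three positions of $T^1(v^{\min}_1)$ relative to the range $[T^2(v^{\max}_2),T^2(v^{\min}_2)]$ of $T^2$: below it (Case~1, $J_B=\emptyset$), inside it (Case~2, both parts nonempty), and above it (Case~3, $J_A=\emptyset$).

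The technical heart, and the step I expect to be the main obstacle, is locating $\max_{J_B}H$. The plan is to compute $\beta'(v)=\tfrac{D^2}{D^1}\beta(v)^2/v^2$ and derive the identity
\[ H'(v)=\frac{D^2}{v^2}\bigl[\psi(\beta(v))-\psi(v)\bigr],\qquad \psi(u):=u^2 f'(u), \]
and then to show $\psi$ is strictly increasing on $[v_F,u]$: $\psi'(u)=u\bigl(2f'(u)+u f''(u)\bigr)>0$, since $f''>0$ by strict convexity and $f'(u)\ge 0$ for $u\ge v_F$. Consequently $H'(v)=0$ exactly when $\beta(v)=v$, which for $\delta\neq0$ occurs only at $v^*=(D^1-D^2)/\delta$; thus $H$ has at most one interior critical point on $J_B$, namely $v^*$. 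A continuous function with at most one interior critical point attains its maximum over a closed interval at an endpoint or at that critical point, so $\max_{J_B}H=\max\{H(\hat v_2),H(v^{\max}_2),\,H(v^*)\text{ if }v^*\in J_B\}$; including $H(v^*)$ is harmless even when $\delta<0$ makes $v^*$ a local minimum, since it is then dominated by the endpoints.

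Finally I would assemble the two parts. Taking the larger of $C^1(v^{\min}_1)-C^2(v^{\min}_2)$ (the supremum on $J_A$) and $\max_{J_B}H$, and noting that the left-endpoint value $H(\hat v_2)=C^1(v^{\min}_1)-C^2(\hat v_2)$ is always dominated by $C^1(v^{\min}_1)-C^2(v^{\min}_2)$ (because $\hat v_2\ge v^{\min}_2$ and $C^2$ is nondecreasing), reproduces the formulas for $z^*$ in Cases~1--3, including the well-definedness test on $v^*$ and the membership test $v^*\in[\hat v_2,v^{\max}_2]$. Combined with the first paragraph this yields the claimed equivalence. The only points needing care are the region bookkeeping (which endpoint supplies the maximum in each case) and the degenerate situations $\delta=0$ or $D^1=D^2$, where $v^*$ is undefined and $H$ is monotone (or constant), matching the ``$v^*$ not well-defined'' branches.
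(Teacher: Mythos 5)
Your proposal is correct and follows essentially the same route as the paper: you reduce condition~\eqref{eqn:dom:speed:cost} to maximizing $\phi(v_2)-C^2(v_2)$ over $S_v^2$ with the inner minimizer at $\max\{v^{\min}_1,\beta(v_2)\}$, split the interval at $\tilde v=D^2v^{\min}_1/(D^1-\delta v^{\min}_1)$, and locate the maximum of $H$ via the identity $H'(v)=\tfrac{D^2}{v^2}\bigl[\beta(v)^2f'(\beta(v))-v^2f'(v)\bigr]$ together with the monotonicity of $u^2f'(u)$ on $[v_F,u]$ — which is exactly the paper's Lemma~\ref{lem:H1}, with your $\delta=0$ remark playing the role of Lemma~\ref{lem:H2}. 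The only cosmetic difference is that you make explicit the domination of $H(\tilde v)$ by $C^1(v^{\min}_1)-C^2(v^{\min}_2)$ and the harmlessness of including $H(v^*)$ when it is a local minimum, both of which the paper also uses implicitly.
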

\begin{proof}
To check if Condition \eqref{eqn:dom:speed:cost} holds is equivalent to solve a one-dimensional constrained optimization problem with decision variable $v_2$. To see this, fix $v_2\in [v^{\min}_2, v^{\max}_2]$ and define
\begin{subequations}
\begin{align}
\phi(v_2) = \min \ & C^1(v_1) \\
\text{s.t.} \ & T^1(v_1)= s^1 + \Gamma^1 + \frac{D^1}{v_1} \le s^2 + \Gamma^2 + \frac{D^2}{v_2}=T^2(v_2)  \label{eq:phi2}\\
& v^{\min}_1 \le v_1 \le v^{\max}_1.
\end{align}
\end{subequations}
Constraint~\eqref{eq:phi2} is equivalent to $v_1\ge \beta(v_2)$. Based on the assumption that Condition \eqref{eqn:dom:speed:time} holds and Proposition \ref{observe} in the Appendix, we have $\beta(v^{\max}_2) \leq v^{\max}_1$. Thus $\beta(v_2) \leq v^{\max}_1$ for any $v_2\in S_v^2$. Therefore, $\phi(v_2)=C^1(v^{\min}_1)$ if $\beta(v_2) \le v^{\min}_1$ and $\phi(v_2) = C^1(\beta(v_2))$ otherwise. Define
\begin{subequations}
\begin{align*}
\psi(v_2)= \phi(v_2) - C^2(v_2)\\
z^* = \max\{\psi(v_2) \mid v_2 \in [v^{\min}_2, v^{\max}_2]\}
\end{align*}
\end{subequations}
Then~\eqref{eqn:dom:speed:cost} is satisfied if and only if
\[z^* - \mu(P^1) + \mu( P^2 ) < 0.\]
We consider the following three cases to compute $z^*$.
\begin{enumerate}
	\item $T^1(v^{\min}_1) \le T^2(v^{\max}_2)$. Then for any $v\in [v^{\min}_2, v^{\max}_2]$, $v^{\min}_1 \ge \beta(v)$ and $\phi(v)=C^1(v^{\min}_1)$. Thus $\psi(v)= C^1(v^{\min}_1) - C^2(v)$ and $z^*= C^1(v^{\min}_1) - C^2(v^{\min}_2) $.
	
	\item $T^2(v^{\max}_2) < T^1(v^{\min}_1) \le T^2(v^{\min}_2)$. Then $\beta(v^{\min}_2) \le  v^{\min}_1 < \beta(v^{\max}_2) $. Since $\beta$ is monotonically increasing by Proposition \ref{observe} in the Appendix, there exists $\tilde{v} \in [v^{\min}_2, v^{\max}_2]$ such that $\beta(\tilde{v}) = v^{\min}_1$. In particular, $\tilde{v}=D^2 v^{\min}_1/(D^1 - \delta v^{\min}_1)$. Then $\phi(v) = C^1(v^{\min}_1)$ for $v \in [v^{\min}_2, \tilde{v}]$, and $\phi(v) = C^1(\beta(v))$ for $v \in [\tilde{v}, v^{\max}_2]$.
	
	\noindent When $v \in [v^{\min}_2, \tilde{v}]$, the function $\psi(v)=C^1(v^{\min}_1) - C^2(v)$. Its maximum is attained at $C^1(v^{\min}_1) -  C^2(v^{\min}_2)$. When $v \in [\tilde{v}, v^{\max}_2]$, the function $\psi(v)=C^1(\beta(v)) - C^2(v)= H(v)$, which attains its maximum at either $\tilde{v}, v^{\max}_2$, or $v^*$ if it is well defined and $v^* \in [\tilde{v}, v^{\max}_2]$. Note that $\psi(\tilde{v}) = C^1(v^{\min}_1) - C^2(\tilde{v}) \le C^1(v^{\min}_1) -  C^2(v^{\min}_2)$. Therefore from Lemmas~\ref{lem:H1} and~\ref{lem:H2} in the Appendix, $z^*$ equals to the maximum of $C^1(v^{\min}_1) -  C^2(v^{\min}_2)$, $C^1(\beta(v^{\max}_2)) - C^2(v^{\max}_2)$, or $C^1(v^*) - C^2(v^*)$ if $v^*$ is well-defined and $v^* \in [\tilde{v}, v^{\max}_2]$.	
	
	\item $T^2(v^{\min}_2) < T^1(v^{\min}_1)$. Then for any $v_2 \in [v^{\min}_2, v^{\max}_2]$, $\beta(v) > v^{\min}_1$ and $\phi(v)=C^1(\beta(v))$. Thus $\psi(v) = C^1(\beta(v)) - C^2(v)= H(v)$ for any $v\in [v^{\min}_2, v^{\max}_2]$. Therefore from Lemmas~\ref{lem:H1} and~\ref{lem:H2} in the Appendix, the maximum of $H(v)$ is attained at $v^{\min}_2$, $v^{\max}_2$, or $v^*$ if $v^*$ is well-defined and $v^* \in [v^{\min}_2, v^{\max}_2]$. 
\end{enumerate} 
\end{proof}

\section{Other Components of The BCP Algorithm} \label{sec:other}
\paragraph{The choice of route $r$} We consider three variants of the set-partitioning formulation introduced in Section~\ref{sec:problem}. These variants differ on what $r$ represents in each triple $(r, I, \vec{s})$: an elementary route, a q-route, or a 2-cycle-free q-route. The linear programming relaxation with elementary routes provides the tightest lower bound, but usually the corresponding pricing problem takes the longest time to solve; on the other hand, the linear programming relaxation with q-routes gives the weakest lower bound, but the pricing problem takes the least time to solve. 

\paragraph{Cutting planes} At each node of the branch-and-bound tree, we add the rounded capacity inequalities for standard capacitated VRP in the following form
\[ \sum_{i \in S}{\sum_{j \notin S}{x_{ij}}} \geq \xi(S),\] 
where $S$ is a subset of customers and $\xi(S)$ is a lower bound on the minimum number of vehicles needed to visit customers in set $S$~\citep{toth2014vehicle}. These inequalities are separated via a heuristic from package \citep{lysgaard2003cvrpsep}. In our implementation, these inequalities are added in terms of the route variables $z_{r,I,\vec{s}}$, using the relationship $x_{ij} = \sum_{r\in \Omega_2}\beta_{ijr}z_{r,I,\vec{s}}$, where $\beta_{ijr}$ indicates the number of times route $r$ traverses arc $(i,j)$.

\paragraph{Branching rules} We branch on the arc variable $x_{ij}$, which is implicitly enfored with inequalities $\sum_{r\in \Omega_2}\beta_{ijr}z_{r,I,\vec{s}} \le 0$ and $\sum_{r\in \Omega_2}\beta_{ijr}z_{r,I,\vec{s}} \ge 1$. The structure of the pricing problem will remain the same after branching. 


\section{Computational Experiments} \label{sec:computation}
We conduct extensive computational experiments of the proposed BCP algorithm, and compare its performance with that of a mixed integer convex programming formulation \citep{Fuka2015TS} solved by a state-of-the-art optimization solver CPLEX. We test on two set of instances in the context of maritime transportation and road transportation, respectively. All the algorithms are coded in {\tt C++}. Computational experiments are conducted on an AMD machine running at $2.3$ GHz with $264$ GBytes of RAM memory, under Linux Operating System. We use SCIP Optimization Suite 3.2.0 for the branch and price framework. The time limit for each instance is set to 3600s. We first present in Section~\ref{sec:maritime} and Section~\ref{sec:road} the results of the best variant of our BCP algorithms: elementary routes for maritime transportation instances and 2-cycle-free routes for road transportation instances. Then we show in Section~\ref{sec:route_choice} the performances of different variants of our BCP algorithm. All reported computational time is in seconds.

\subsection{Maritime transportation} \label{sec:maritime}
\subsubsection{Instance generation} \label{testMaritime}
We generated test instances based on the instance generator for industrial 
and tramp ship routing and 
scheduling problems (TSRSP) available in~\cite{benchmark}. 
However, we need to modify such instances since that 
problem has several differences to the JRSP, for instance, the same port can
appear in multiple origin/destinations of orders, whereas in the JRSP, a given
client has a single demand. 

Thus we proceed as follows to generate a JRSP instance. The set of clients of
JRSP is
the set of ports in the TSRSP. 
We set the demand of a client (port) to be the first demand of an order containing that port as an origin and pick the corresponding time window and service time of that port according to that order.  


We use the bunker consumption rate function $f(v) = 0.0036v^2 - 0.1015v + 0.8848$ (per nautical mile) as the cost function over each arc. This function is based on real bunker consumption data for a specific ship~\citep{fagerholt2010reducing}.
We also set the speeds lower and upper bounds at 14 and 20 knots
respectively, since that is the range for which the fuel consumption function is
valid~\citep{fagerholt2010reducing}.


The instances in \cite{benchmark} were divided into two families: deep sea shipping and
short sea shipping. In deep sea shipping, the cargoes are transported long distances and
across at least one of the big oceans, giving long sailing time. In short sea shipping,
the operation is only regional. We generate short sea instances with 30 and 39 customers, 
and deep sea instances with 30, 40, and 50 customers. 
For each instance type with a given number of customers, we generate 5 random instances. 
Therefore we has 25 test instances in total for the maritime transportation problems.
The instances can be downloaded at \url{http://www.menet.umn.edu/~qhe/}.
Each instance is referred to in the format: \texttt{type}-$n$-$i$, where \texttt{type} is
either ``deep'' or ``short'' meaning either the deep sea or short sea instances, $n$ is
the number of customers and $i$ means it is the $i$-th instance (out of five) with the same 
type and number of customers.

\subsubsection{Computational results}
We compare the computational performance of our algorithm with that of a mixed integer convex programming formulation \citep{Fuka2015TS} solved by a state-of-the-art optimization solver CPLEX. The results are presented in Table \ref{table-maritime}. Columns 2 to 4 show the best lower bound (BLB) and the best upper bound (BUB) evaluated by CPLEX and the associated computational time. The entry ``\texttt{tl}'' denotes that the instance is not solved to optimality within the time limit. Similar results are given in columns 5 to 7 for our algorithm.  Symbol ``-'' in the table means that our algorithm is not able to obtain the associated bound within the time limit.

\begin{table}
\small
\def\arraystretch{0.8}
\centering
\begin{tabular}{llllrlllr}
& & \multicolumn{3}{c}{CPLEX} & & \multicolumn{3}{c}{BCP} \\
\cline{3-5} \cline{7-9}
Instance & & BLB & BUB & Time & & BLB & BUB & Time \\
\hline
deep-30-1 & & 14356.2 & 14356.2 & 43.5 & & 14356.3 & 14356.3 & 2.1\\
deep-30-2 & & 17943.3 & 17943.3 & 80.2 & & 17943.4 & 17943.4 & 94.3 \\
deep-30-3 & & 10812.3 & 10812.3 & 14.0 & & 10812.3 & 10812.3 & 0.2 \\
deep-30-4 & & 15166.7 & 15166.7 & 94.7 & & 15166.7 & 15166.7 & 0.6 \\
deep-30-5 & & 16268.0 & 16268.0 & 77.1 & & 16268.1 & 16268.1 & 1.0 \\
deep-40-1 & & 15738.3 & 15738.3 & 395.4 & & 15738.3 & 15738.3 & 5.34 \\
deep-40-2 & & 14565.6 & 14859.2 & \texttt{tl} & & 14859.3 & 14859.3 & 59.6 \\
deep-40-3 & & 18877.3 & 18877.3 & 1047 & & 18877.4 & 18877.4 & 200 \\
deep-40-4 & & 18408.3 & 18408.3 & 516.1 & & 18408.3 & 18408.3 & 365 \\
deep-40-5 & & 18065.3 & 18065.3 & 1746 & & 18065.4 & 18065.4 & 3.4  \\
deep-50-1 & & 13957.9 & 19930.1 & \texttt{tl} & & - & - & \texttt{tl} \\
deep-50-2 & & 11417.2 & 19323.2 & \texttt{tl} & & - & 17918.0 & \texttt{tl} \\
deep-50-3 & & 15633.3 & 19650.6 & \texttt{tl} & & 19649.4 & 134609.5 & \texttt{tl} \\
deep-50-4 & & 16345.8 & 19589.6 & \texttt{tl} & & 19538.9 & 19538.9 & 2095 \\
deep-50-5 & & 10622.1 & 17811.3 & \texttt{tl} & & - & 17605.6 & \texttt{tl} \\
\hline
short-30-1 & & 3091.7 & 3091.7 & 16.0 & & 3091.7 & 3091.7 & 0.8 \\
short-30-2 & & 2814.8 & 2814.8 & 33.9 & & 2814.8 & 2814.8 & 2.0 \\
short-30-3 & & 3508.4 & 3508.4 & 231.2 & & 3508.4 & 3508.4 & 336.1 \\
short-30-4 & & 3137.5 & 3137.5 & 12.9 & & 3137.5 & 3137.5 & 5.42 \\
short-30-5 & & 3081.6 & 3081.6 & 13.6 & & 3081.6 & 3081.6 & 1.1 \\
short-39-1 & & 3643.0 & 3643.0 & 744.3 & & 3643.0 & 3643.0 & 41.8 \\
short-39-2 & & 3271.7 & 3569.4 & \texttt{tl} & & 3451.2 & 3772.6 & \texttt{tl} \\
short-39-3 & & 3627.8 & 3627.8 & 1315 & & 3594.7 & 4024.1 & \texttt{tl} \\
short-39-4 & & 3663.3 & 3663.3 & 567.4 & & 3663.3 & 3663.3 & 594.6 \\
short-39-5 & & 3437.2 & 3437.2 & 108.0 & & 3437.2 & 3437.2 & 79.8 \\
\hline
\end{tabular}
\caption{Comparing BCP algorithm and CPLEX on solving Maritime instances} \label{table-maritime}
\end{table}

We see from Table \ref{table-maritime} that our algorithm is able to provide optimality certificates for 19 out of 25 instances, whereas CPLEX can only do so for 18 instances. Among the 17 instances that are solved to optimality by both algorithms within the time limit, the average solution times for CPLEX and our algorithm are 337s and 102s, respectively. Comparing the performances in more detail separately for deep see and short sea instances, it is clear that our algorithm significantly outperforms CPLEX for the deep sea instances, while CPLEX gets more competitive for the short sea instances. In particular, our algorithm solves 11 out of 15 deep sea instances to optimality, with an average solution time of 1148s (the solution time of each unsolved instance is counted as 3600s), while CPLEX solves 9 out of 15 deep sea instances to optimality, with an average solution time of 1708s. For short sea instance, our algorithm takes less solution time for 6 out 10 instances, while CPLEX is faster in 3 out of 10 instances.  

\subsection{Road transportation}  \label{sec:road}
We next test our algorithm on the PRP proposed in~\cite{bektacs2011pollution}. The speed-dependent fuel cost function $f(v)=\frac{\pi_1}{v}+\pi_2 v^2$ with $\pi_1= 1.42\times 10^{-3}$ (\textit{\pounds/s}) and $\pi_2=1.98 \times 10^{-7}$ (\textit{\pounds s$^2$/m$^3$}). The PRP also has a load-dependent cost component and a cost component linear in the total travel time over the route; see~\cite{bektacs2011pollution} for detailed cost calculation of a route in the PRP. The PRP can be formulated as a mixed-integer second-order cone program, which can be handled directly by CPLEX~\citep{Fuka2015TS}. Our BCP algorithm needs to include an additional load-dependent component in Condition \eqref{eqn:dom:speed:cost} when checking if a label is dominated.

\subsubsection{Test instances and preprocessing}
We test our algorithm on the benchmark instances in~\cite{bektacs2011pollution} and \cite{kramer2014matheuristic}, generated using the geographical locations of United Kingdom cities. The instances can be divided into three groups, UK-A, UK-B, and UK-C. UK-A instances have the widest time windows, UK-B instances have the narrowest time windows, and UK-C instances have time windows of moderate length. We use UK$n$\texttt{G}-$i$ to reference each instance, where $n$ is the number of customers, $\texttt{G} \in \{A,B,C\}$, and $i$ is the $i$-th instance with the same type and number of customers. For example, UK20A-1 represents the first instance in the UK-A group with 20 customers.

Before solving the instances with our algorithm and CPLEX, we tighten the time windows of each UK instance following a similar procedure for the TSPTW, described in~\cite{ascheuer2001solving} and \cite{dash2012time}. For each instance, we set the speed lower limit $l_{ij}$ to be $(\frac{\pi_1}{2\pi_2})^{\frac{1}{3}}$, at which the speed-dependent fuel cost function attains its minimum. We repeat the following tightening steps until no more changes can be made:
\begin{itemize}	
	\item $a_k \leftarrow \max\{a_k, \min_{i\in \delta^-(k)}\{a_i+\tau_i+\frac{d_{ik}}{u_{ik}}\}\}$ for $k\in V$,
	\item $a_k \leftarrow \max\{a_k, \min_{j \in \delta^+(k)}\{a_j-\frac{d_{kj}}{l_{kj}}-\tau_k\}\}$ for $k\in V$,
	\item $b_k \leftarrow \min \{b_k, \max\{a_k, \max_{i\in \delta^-(k)}\{b_i +\tau_i+\frac{d_{ik}}{l_{ik}}\}\}\}$ for $k\in V$,
	\item $b_k \leftarrow \min \{b_k, \max_{j\in \delta^+(k)}\{b_j - \frac{d_{kj}}{u_{kj}}-\tau_k\}\}$ for $k\in V$.
\end{itemize}

\subsubsection{Computation results}
We present in Table \ref{table-PRPlib} the results for the PRP instances with 10 and 20 customers by our BCP algorithm and the branch-and-cut algorithm in~\cite{Fuka2015TS}, labeled as BC. 

\begin{table}[ht]
\small
\def\arraystretch{0.8}
\centering
\begin{tabular}{lllrrllllrllr}
& & & & & & & & \multicolumn{2}{c}{BC} & & \multicolumn{2}{c}{BCP} \\
\cline{9-10} \cline{12-13}
Instance & & Opt. & \begin{tabular}{l} BC\\Time \end{tabular} & \begin{tabular}{l} BCP\\Time \end{tabular} & & Instance & & Best & \begin{tabular}{l} Time/\\Gap \end{tabular} & & Best & \begin{tabular}{l} Time/\\Gap \end{tabular} \\
\cline{1-5} \cline{7-13}
UK10A-1 & & 170.64 & 1354 & 0.4 & & UK20A-1 & & 352.45 & 22.9\% & & 351.82 & 13.9 \\
UK10A-2 & & 204.88 & 813 & 0.1 & & UK20A-2 & & 365.77 & 20.7\% & & 365.77 & 1.8 \\
UK10A-3 & & 202.56 & 1708 & 0.2 & & UK20A-3 & & 230.49 & 23.6\% & & 230.49 & 11.8 \\
UK10A-4 & & 189.88 & 844 & 0.2 & & UK20A-4 & & 347.04 & 21.2\% & & 347.04 & 43.5 \\
UK10A-5 & & 175.59 & 2649 & 0.2 & & UK20A-5 & & 329.63 & 24.3\% & & 323.44 & 16.6 \\
UK10A-6 & & 214.48 & 1472 & 0.04 & & UK20A-6 & & 367.73 & 25.0\% & & 364.23 & 18.1 \\
UK10A-7 & & 190.14 & 882 & 0.07 & & UK20A-7 & & 258.75 & 23.3\% & & 253.61 & 0.06\% \\
UK10A-8 & & 222.17 & 564 & 0.02 & & UK20A-8 & & 303.17 & 23.0\% & & 301.51 & 12.7 \\
UK10A-9 & & 174.54 & 352 & 0.07 & & UK20A-9 & & 362.56 & 19.5\% & & 362.56 & 18.6 \\
UK10A-10 & & 189.82 & 211 & 0.1 & & UK20A-10 & & 317.79 & 26.3\% & & 313.33 & 13.7 \\
\cline{1-5} \cline{7-13}
UK10B-1 & & 246.44 & 0.2 & 0.01 & & UK20B-1 & & 469.35 & 1.1 & & 469.35 & 0.02 \\
UK10B-2 & & 303.73 & 0.2 & 0.01 & & UK20B-2 & & 477.05 & 4.8 & & 477.05 & 0.02 \\
UK10B-3 & & 301.89 & 0.1 & 0.01 & & UK20B-3 & & 354.46 & 0.9 & & 354.46 & 0.03 \\
UK10B-4 & & 273.90 & 0.3 & 0.01 & & UK20B-4 & & 523.59 & 2.8 & & 523.59 & 0.1 \\
UK10B-5 & & 255.07 & 0.2 & 0.01 & & UK20B-5 & & 447.33 & 2.3 & & 447.33 & 1.2 \\
UK10B-6 & & 332.34 & 0.2 & 0.01 & & UK20B-6 & & 511.78 & 6.7 & & 511.78 & 7.3 \\
UK10B-7 & & 314.64 & 0.6 & 0.03 & & UK20B-7 & & 379.02 & 3.0 & & 379.02 & 0.02 \\
UK10B-8 & & 339.36 & 0.2 & 0.01 & & UK20B-8 & & 431.31 & 1.6 & & 431.31 & 4.8 \\
UK10B-9 & & 261.10 & 0.2 & 0.01 & & UK20B-9 & & 548.68 & 1.7 & & 548.68 & 0.1 \\
UK10B-10 & & 285.20 & 0.2 & 0.01 & & UK20B-10 & & 410.32 & 1.6 & & 410.32 & 0.1 \\
\cline{1-5} \cline{7-13}
UK10C-1 & & 210.18 & 1.6 & 0.07 & & UK20C-1 & & 432.82 & 6.8\% & & 432.82 & 90.3 \\
UK10C-2 & & 271.93 & 72.4 & 0.01 & & UK20C-2 & & 450.35 & 12.7\% & & 448.29 & 4.4 \\
UK10C-3 & & 229.18 & 2.5 & 0.01 & & UK20C-3 & & 287.04 & 9.9\% & & 287.04 & 4.1 \\
UK10C-4 & & 230.52 & 5.2 & 0.01 & & UK20C-4 & & 434.23 & 7.3\% & & 434.23 & 4.5 \\
UK10C-5 & & 205.49 & 7.6 & 0.04 & & UK20C-5 & & 382.46 & 10.9\% & & 381.70 & 5.9 \\
UK10C-6 & & 255.82 & 17.5 & 0.01 & & UK20C-6 & & 444.35 & 14.9\% & & 444.35 & 28.1 \\
UK10C-7 & & 217.79 & 14.3 & 0.02 & & UK20C-7 & & 321.67 & 17.3\% & & 317.73 & 63.8 \\
UK10C-8 & & 251.29 & 4.7 & 0.01 & & UK20C-8 & & 410.35 & 1224 & & 410.35 & 0.5 \\
UK10C-9 & & 186.04 & 5.8 & 0.01 & & UK20C-9 & & 421.39 & 3347 & & 421.39 & 0.9 \\
UK10C-10 & & 231.62 & 3.8 & 0.02 & & UK20C-10 & & 390.68 & 7.2\% & & 384.88 & 13.7 \\
\cline{1-5} \cline{7-13}
\end{tabular}
\caption{Comparison between the BCP algorithm and the BC algorithm in~\cite{Fuka2015TS} on UK instances.}\label{table-PRPlib}
\end{table}

The first four columns show the results of both algorithms for instances with 10 customers. Both algorithms can solve these instances to optimality within the time limit. The column ``Opt.'' gives the optimal objective value for each instance. The next five columns show the performance of both algorithms on instances with 20 customers. Columns with label ``Best'' show the best objective value (upper bound) found by the two algorithms, and columns ``GAP/time'' give the total time (seconds) it takes to solve the instance to optimality if the time limit is not reached, and give the ending optimality gap otherwise. 

We can see that the proposed BCP algorithm significantly outperforms the previous BC algorithm for all instances considered. The BCP algorithm is capable of solving 59 out of 60 instances within the time limit, and takes 6.48s on average to solve these 59 instances. On the other hand, the BC algorithm can only solve 42 instances, takes 371.1 s on average to solve them, and the average optimality gap for the remaining 18 instances is as large as $17.6\%$. Note that there are four UK20-A instances and six UK20-C instances on which the BC algorithm finds the optimal solution but is not able to prove its optimality within the time limit.
 
We also tested our algorithm on instances with more customers. The results are summarized in Table~\ref{table-numOpt}, where an entry $n_1 (n_2)$ denotes that $n_1$ out of $n_2$ instances in the group are solved to optimality. Our BCP algorithm is able to solve 2 out of 10 UK-A instances and 5 out of 10 UK-C instances with 25 customers, 1 out of 10 UK-C instance with 50 customers, and 6 out of 10 UK-B instances with 50 customers. Note that from \ref{table-PRPlib}, the BC algorithm is not able to solve within the time limit any UK-A of 20 customers or most UK-C instances with 20 customers. We also observe in our experiment that the BC algorithm is not able to solve any UK-B instance with 50 customers. These results show that our BCP algorithm is much better at tackling larger-size instances. 

\begin{table}[htb]
\footnotesize
\def\arraystretch{0.6}
\centering
\begin{tabular}{ccccccc}
\multicolumn{2}{c}{Number of customers} & & 10 & 20 & 25 & 50 \\
\hline
& UK-A & & 10 (10) & 9 (10) & 2 (10) & 0 (10) \\
& UK-B & & 10 (10) & 10 (10) & 10 (10) & 6 (10) \\
& UK-C & & 10 (10) & 10 (10) & 5 (10) & 1 (10) \\
\hline
\end{tabular}
\caption{Number of PRP instances solved to optimality by our BCP algorithm.}\label{table-numOpt}
\end{table}

\subsection{The impact of route choices in our BCP algorithm} \label{sec:route_choice}
We investigate how different route choices in our set partitioning formulation affects the overall performance of the algorithm. The route choice in the set partitioning formulation affects the efficiency of the pricing algorithm as well as the strength of the linear programming relaxation bound of the corresponding formulation. We mainly study three different route choices: q-routes, 2-cycle-free q-routes, and elementary routes. 
The performance of the BCP algorithm with different route choices are shown in Table \ref{maritime_routechoice} and \ref{prp_routechoice}, for the maritime instances and the PRP instances, respectively.

For maritime transportation instances, the BCP algorithm with elementary routes works the best, and is able to solve 19 out of 25 instances. The performance of the BCP algorithm with 2-cycle-free q-routes is slightly worse, solving 12 out of 25 instances. We note that the BCP algorithm with q-routes cannot solve even the linear programming relaxation at the root node for most instances within time limit, and therefore the results are not shown in Table \ref{maritime_routechoice}.

\begin{table}[ht]
\small
\def\arraystretch{0.8}
\centering
\begin{tabular}{llllrlllr}
 & & \multicolumn{3}{c}{2-cycle-free} & & \multicolumn{3}{c}{Elementary} \\
\cline{3-5} \cline{7-9}
Instance & & BLB & BUB & time & & BLB & BUB & time \\
\hline
deep-30-1 & & 14356.3 & 14356.3 & 266.0 & & 14356.3 & 14356.3 & 2.1\\
deep-30-2 & & - & 22174.9 & \texttt{tl} & & 17943.4 & 17943.4 & 94.3 \\
deep-30-3 & & 10812.3 & 10812.3 & 0.9 & & 10812.3 & 10812.3 & 0.2 \\
deep-30-4 & & 15166.7 & 15166.7 & 8.6 & & 15166.7 & 15166.7 & 0.6 \\
deep-30-5 & & 16086.2 & 16268.1 & \texttt{tl} & & 16268.1 & 16268.1 & 1.0 \\
deep-40-1 & & 15738.3 & 15738.3 & 135.7 & & 15738.3 & 15738.3 & 5.34 \\
deep-40-2 & & 14668.3 & 20113.0 & \texttt{tl} & & 14859.3 & 14859.3 & 59.6 \\
deep-40-3 & & 18877.4 & 18877.4 & 71.3 & & 18877.4 & 18877.4 & 200 \\
deep-40-4 & & 18322.0 & 18408.3 & \texttt{tl} & & 18408.3 & 18408.3 & 365 \\
deep-40-5 & & - & 18065.4 & \texttt{tl} & & 18065.4 & 18065.4 & 3.4  \\
deep-50-1 & & - & - & \texttt{tl} & & - & - & \texttt{tl} \\
deep-50-2 & & - & - & \texttt{tl} & & - & 17918.0 & \texttt{tl} \\
deep-50-3 & & - & - & \texttt{tl} & & 19649.4 & 134609.5 & \texttt{tl} \\
deep-50-4 & & - & - & \texttt{tl} & & 19538.9 & 19538.9 & 2095 \\
deep-50-5 & & - & - & \texttt{tl} & & - & 17605.6 & \texttt{tl} \\
\hline
short-30-1 & & 3091.7 & 3091.7 & 19.0 & & 3091.7 & 3091.7 & 0.8 \\
short-30-2 & & 2814.8 & 2814.8 & 2.7 & & 2814.8 & 2814.8 & 2.0 \\
short-30-3 & & 3366.7 & 3747.2 & \texttt{tl} & & 3508.4 & 3508.4 & 336.1 \\
short-30-4 & & 3137.5 & 3137.5 & 46.4 & & 3137.5 & 3137.5 & 5.42 \\
short-30-5 & & 3081.6 & 3081.6 & 2.1 & & 3081.6 & 3081.6 & 1.1 \\
short-39-1 & & 3643.0 & 3643.0 & 118.5 & & 3643.0 & 3643.0 & 41.8 \\
short-39-2 & & 3432.2 & 3965.7 & \texttt{tl} & & 3451.2 & 3772.6 & \texttt{tl} \\
short-39-3 & & 3496.3 & 4501.7 & \texttt{tl} & & 3594.7 & 4024.1 & \texttt{tl} \\
short-39-4 & & 3663.3 & 3663.3 & 2077 & & 3663.3 & 3663.3 & 594.6 \\
short-39-5 & & 3437.2 & 3437.2 & 16.9 & & 3437.2 & 3437.2 & 79.8 \\
\hline
\end{tabular}
\caption{Comparison between the BCP algorithms with different route choices for the maritime instances}
\label{maritime_routechoice}
\end{table}

For the PRP instances, we test the performance of the BCP algorithm with three route choices on a total of 120 UK instances. The overall performance of the BCP algorithms with three different routes are given in Table \ref{prp_routechoice}. Note that when calculating the average computational time, we use the time limit 3600s for the instances not solved to optimality within the time limit. When calculating the average optimality gap, we use $100\%$ for the gap if the linear programming relaxation at the root node is not even solved within the time limit. We see from Table \ref{prp_routechoice} that the BCP algorithm with 2-cycle-free routes has the best performance overall. On the other hand, the BCP algorithm with elementary routes has the worst performance. The detailed results for all the instances are provided in Tables \ref{table-prp10} to \ref{table-prp50} in the Appendix.

\begin{table}[ht]
\small
\def\arraystretch{0.8}
\centering
\begin{tabular}{llllllll}
\multicolumn{2}{c}{} & & Elementary & & 2-cycle-free & & q-route \\
\hline
\multirow{2}{*}{UK-A} & \# of instance solved & & 20 & & 21 & & 27 \\
 &  Time/Gap & & 1965/50.0\% & & 1858/47.5\% & & 1462/15.8\% \\
\cline{2-8}
\multirow{2}{*}{UK-B} & \# of instance solved & & 32 & & 36 & & 32 \\
 &  Time/Gap & & 791/12.6\% & & 496/0.0 & & 808/0.1\% \\
\cline{2-8}
\multirow{2}{*}{UK-C} &  \# of instance solved & & 23 & & 26 & & 17 \\
 &  Time/Gap & & 1612/27.6\% & & 1300/6.1\% & & 2078/4.1\% \\
\hline
\end{tabular}
\caption{Comparison between the BCP algorithms with different route choices for the PRP instances}\label{prp_routechoice}
\end{table}

Our computational results show that which type of route to choose in our set partitioning formulation heavily depends on the characteristics of the instances. However, we observed some general rules. For example, q-route is not a good choice if some customers are clustered, which is the case for the maritime instances, where customers are clustered in Europe or Asia. This allows many q-routes with small cycles among the clustered customers, which makes it very time consuming for the labeling algorithm to finish.

\section{Conclusion} \label{sec:conclusion}
We studied a new transportation model that simultaneously optimizes the routing and speed decisions,
motivated by applications with reducing fuel consumption and GHG emissions as their objectives.
Our model can accommodate any empirical or physical fuel consumption function that is strictly convex in terms of speed. 
We then proposed a novel set partitioning formulation, where each column represents a combination of a routes a speed profile over that route. This formulation facilitates an efficient BCP algorithm with an effective dominance rule. Extensive computational results showed that the proposed BCP algorithm outperforms a state-of-the-art mixed-integer convex optimization solver.


%
\begin{appendices}
\section{Proof of Proposition~\ref{prop:dom}}
Suppose there exists $L^1$ satisfying conditions \ref{prop:dom:allowed}--\ref{prop:dom:speed}. 
Let $(P, v_P)\in E(L^2)$. For any $v_2\in S_v^2$ such that $(\bar{v}^2, v_2, v_P)$ is a feasible speed vector over $P^2\oplus P$, conditions \ref{prop:dom:lastvx}, \ref{prop:dom:allowed}, \ref{prop:dom:load}, and \eqref{eqn:dom:speed:time} ensure that there exists $v_1\in S_v^1$ such that $(\bar{v}^1, v_1, v_P)$ is a feasible speed vector over $P^1\oplus P$. Therefore, $(P,v_P) \in E(L^1)$.

We are going to show that for any triple $(P^2\oplus P, I^2, \vec{s}^2)$ induced by $v_2\in S_v^2$ that leads to a feasible speed vector $(\bar{v}^2, v_2, v_P)$ on $P^2\oplus P$, there exists a triple $(P^1\oplus P, I^3, \vec{s}^3)$ with a smaller reduced cost. It is sufficient to consider the optimal speed vector $\vec{v}^*$ over $\PL^2 \oplus P$ that is consistent with label $L^2$, i.e., the components of $\vec{v}^*$ from the depot to vertex $i_{w^2}$ are $\bar{v}^2$, and customers after $i_{w^2}$ and before $i^2$ (including $i^2$) are all seamless. Let $v^*_2$ be the component of $\vec{v}^*$ corresponding to the speed over the arc entering vertex $i^2$. Then the vehicle travels along $\PL^2 \oplus P$ in the following manner: start from the depot to vertex $i_{w^2}$ with speed $\bar{v}^2$, leave from vertex $i_{w^2}$ to vertex $i^2$ with speed $v^*_2$, and leave from vertex $i^2$ back to the depot with speeds $\vec{v}^*_P$, the components of $\vec{v}^*$ that correspond to $P$. Assume that the vehicle returns to the depot at time $t_{P^2 \oplus P}$, and the optimal cost on $P$ is $F^*$. Thus the total cost of route $P^2 \oplus P$ with speed vector $\vec{v}^*$ is $C^2(v_2) + F^*$.

Now we show how the route $P^1\oplus P$ admits a triple $(P^1\oplus P, I^3, \vec{s}^3)$ with a better reduced cost than $(P^2\oplus P, I^2, \vec{s})$ corresponding to the speed vector $\vec{v}^*$. Since $v_2 \in S_v^2$, by \eqref{eqn:dom:speed:time} there exists $v_1\in S_v^1$ such that $T^1(v_1) \le T^2(v_2)$.
Therefore, it is feasible to travel along $\PL^1 \oplus P$ in the following manner:
travel from the depot to vertex $i_{w^1}$ along $\PL^1$ with speed vector $\bar{v}^1$, travel from vertex $i_{w^1}$ to vertex $i^1$ with speed $v_1$, finish serving vertex $i^1$ at $T^1(v_1)$, leave vertex $i^1$ at time $T^2(v_2)$, travel along $P$ using speeds given by the corresponding components in $\vec{v}^*$, and return to the depot at time $t_{P^2\oplus P}$. Then the total cost of traveling along route $\PL^1\oplus P$ in the above manner is $C^1(v_1) + F^*$.

\noindent Figure \ref{graph-comparison} illustrates the comparison between $\PL^1 \oplus P$ and $\PL^2 \oplus P$.
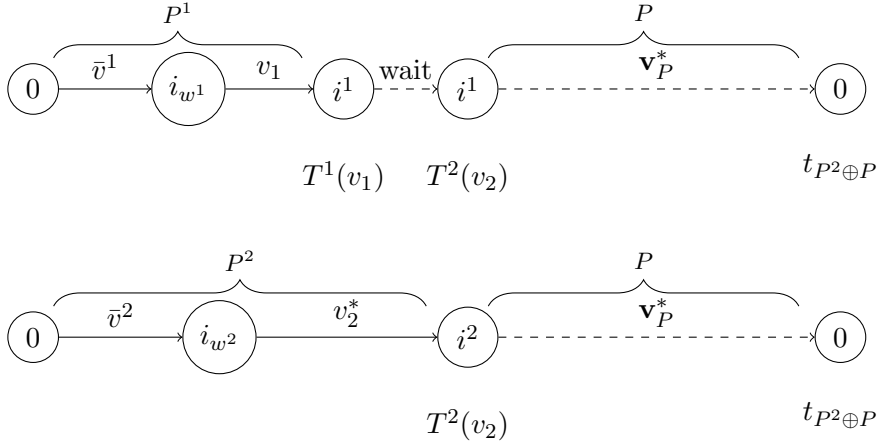
\begin{figure}[tbh]
\begin{center}
\begin{tikzpicture}[x=0.05\textwidth,y=0.05\textwidth]
\tikzstyle{every node}=[circle,draw,minimum
size=0.02\textwidth]


\node (a) at (-1,0) {$0$}; \node (b) at (2,0) {$i_{w^2}$}; \node (c) at (6,0) [label=below:$T^2(v_2)$] {$i^2$}; \node (d) at (12,0) [label=below:$t_{P^2 \oplus P}$] {$0$};

\node (a2) at (-1,4) {$0$}; \node (b2) at (1.5,4) {$i_{w^1}$}; \node (c2) at (4,4) [label=below:$T^1(v_1)$] {$i^1$}; \node (d2) at (6,4) [label=below:$T^2(v_2)$] {$i^1$}; \node (e2) at (12,4) [label=below:$t_{P^2 \oplus P}$] {$0$};

\tikzstyle{every node}=[]
\draw[->] (a) -- node [above] {$\bar{v}^2$}(b);
\draw[->] (b) -- node [above] {$v^*_2$}(c);
\draw[->,dashed] (c) -- node [above] {$\vec{v}^*_P$}(d);
\draw [decorate,decoration={brace,amplitude=10pt},xshift=-4pt,yshift=0pt] (-0.5,0.5) -- (5.5,0.5) node [black,midway,yshift=0.6cm] {\footnotesize $P^2$};

\draw [decorate,decoration={brace,amplitude=10pt},xshift=-4pt,yshift=0pt] (6.5,0.5) -- (11.5,0.5) node [black,midway,yshift=0.6cm] {\footnotesize $P$};

\draw[->] (a2) -- node [above] {$\bar{v}^1$}(b2);
\draw[->] (b2) -- node [above] {$v_1$}(c2);
\draw[->,dashed] (c2) -- node [above] {wait}(d2);
\draw[->,dashed] (d2) -- node [above] {$\vec{v}^*_P$}(e2);
\draw [decorate,decoration={brace,amplitude=10pt},xshift=-4pt,yshift=0pt] (-0.5,4.5) -- (3.5,4.5) node [black,midway,yshift=0.6cm] {\footnotesize $P^1$};

\draw [decorate,decoration={brace,amplitude=10pt},xshift=-4pt,yshift=0pt] (6.5,4.5) -- (11.5,4.5) node [black,midway,yshift=0.6cm] {\footnotesize $P$};
\end{tikzpicture}
\caption{An illustration of comparison between $\PL^1 \oplus P$ and $\PL^2 \oplus P$.}\label{graph-comparison}	
\end{center}
\end{figure}

Note that traveling along $\PL^1 \oplus P$ in the above manner does not necessarily respect the set of active customers in $L^1$. Nonetheless it is a feasible way to travel along $\PL^1 \oplus P$. Then there must exists a triple $(\PL^1\oplus P, I^3, \vec{s}^3)$ such that $c_{\PL^1 \oplus P, I^3, \vec{s}^3}$ is at most $C^1(v_1) + F^*$. Thus
\begin{subequations}
\begin{align*}
&\bar{c}_{\PL^1 \oplus P, I^3, \vec{s}^3} -\bar{c}_{P^2\oplus P,I^2, \vec{s}}
\le [C^1(v_1) + F^*  -( \mu( P^1) + \mu( P ) +\nu)] \\
-&[C^2(v_2) +  F^* -( \mu( P^2 ) + \mu( P ) +\nu)] \\
=& [C^1(v_1)  - \mu( P^1 ) ] -[C^2(v_2) - \mu( P^2 ) ]\\
< & 0.
\end{align*}
\end{subequations}
The last inequality follows from condition~\eqref{eqn:dom:speed:cost}. 

\section{Properties of functions $\beta$ and $H$}
The following proposition summarizes some properties of function $\beta$, where are easy to verify.
\begin{proposition}\label{observe} $\;$
\begin{itemize}
	\item For any $v\in [v^{\min}_2, v^{\max}_2]$ and $\beta(v) \in [v^{\min}_1, v^{\max}_1]$, $T^1(\beta(v))=T^2(v)$.
	\item Function $\beta$ is monotonically increasing.
	\item $T^1(v_1) \leq T^2(v_2)$ if and only if $\beta(v_2)\leq v_1$. In particular, $T^1(v^{\max}_1) \le T^2(v^{\max}_2)$ if and only if $\beta(v^{\max}_2) \le v^{\max}_1$.
\end{itemize}
\end{proposition}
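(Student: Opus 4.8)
The plan is to verify the three bullets directly from the definitions of $T^1$, $T^2$, and $\beta$, since each is an elementary consequence of recognizing $T^2(v) - s^1 - \Gamma^1 = D^2/v + \delta$ as the \emph{time budget} available to traverse the distance $D^1$ along $P^1$. Before treating the bullets I would record the standing facts I will use repeatedly: the distances satisfy $D^1, D^2 > 0$, all speeds are positive, and for any $v$ at which $\beta(v)$ is intended to be an admissible speed the quantity $T^2(v) - s^1 - \Gamma^1$ is strictly positive (a vehicle needs a positive time budget to cover $P^1$). Stating this positivity once at the outset is cleaner than re-deriving it inside each bullet.

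For the first bullet I would simply substitute $\beta(v)$ into $T^1$. Since $\beta(v) = D^1 / (T^2(v) - s^1 - \Gamma^1)$ by definition, we get $D^1/\beta(v) = T^2(v) - s^1 - \Gamma^1$, whence $T^1(\beta(v)) = s^1 + \Gamma^1 + D^1/\beta(v) = T^2(v)$ using the form of $T^1$ in~\eqref{eq:T}. The hypothesis $\beta(v) \in [v^{\min}_1, v^{\max}_1]$ plays no role in the algebra; it is there only to guarantee that $\beta(v)$ is a legitimate speed for label $L^1$, so that $T^1$ is the finish-time function being applied.

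For the second bullet I would write $\beta(v) = D^1/(D^2/v + \delta)$ and argue monotonicity through the denominator: as $v$ increases the term $D^2/v$ strictly decreases (because $D^2 > 0$), so the positive denominator strictly decreases and hence $\beta$ strictly increases. A one-line derivative check, $\beta'(v) = D^1 D^2 / [\,v^2 (D^2/v + \delta)^2\,] > 0$, gives the same conclusion if preferred.

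For the third bullet I would rearrange $T^1(v_1) \le T^2(v_2)$, i.e.\ $s^1 + \Gamma^1 + D^1/v_1 \le T^2(v_2)$, into $D^1/v_1 \le T^2(v_2) - s^1 - \Gamma^1$ and then invert both sides. Because both sides are positive, inversion reverses the inequality and yields $v_1 \ge D^1/(T^2(v_2) - s^1 - \Gamma^1) = \beta(v_2)$, which is exactly the equivalence; the ``in particular'' clause is just the case $v_1 = v^{\max}_1$, $v_2 = v^{\max}_2$. There is no genuine obstacle here: the only point needing care is confirming $T^2(v_2) - s^1 - \Gamma^1 > 0$ before inverting, so that the inequality flips in the right direction and $\beta(v_2)$ is positive and finite, and this is precisely the positivity I would have flagged in the first paragraph.
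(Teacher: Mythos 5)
Your verification is correct: the paper itself offers no proof of this proposition (it is stated in the appendix with only the remark that the properties are ``easy to verify''), and your direct substitution for the first bullet, monotone-denominator/derivative argument for the second, and positivity-aware inversion for the third are exactly the routine checks the authors had in mind. The one point genuinely worth flagging --- that $T^2(v) - s^1 - \Gamma^1 > 0$ so that $\beta(v)$ is positive and the inequality flips correctly upon inversion --- you identify and handle up front, so nothing is missing.
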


\begin{lemma} \label{lem:H1}
When $v\in [l,u]$, the derivative of $H(v)$ is 0 if and only if $\beta(v)=v$. If $v^*$ is well-defined, then the derivative of $H(v)$ equals to 0 at $v=v^*$.
\end{lemma}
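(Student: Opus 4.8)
The plan is to compute $H'(v)$ explicitly by the chain rule and show that it factors through a single auxiliary function $\Phi(x) := x^2 f'(x)$, whose strict monotonicity then delivers both directions of the equivalence. Writing $C^1(x) = F^1_{\sd} + D^1 f(x)$ and $C^2(v) = F^2_{\sd} + D^2 f(v)$, we have $H(v) = C^1(\beta(v)) - C^2(v)$, so
\[ H'(v) = D^1 f'(\beta(v))\,\beta'(v) - D^2 f'(v). \]
First I would differentiate $\beta(v) = D^1/(D^2/v + \delta)$; re-expressing the denominator as $D^2/v + \delta = D^1/\beta(v)$ yields the clean form $\beta'(v) = \frac{D^2}{D^1}\cdot\frac{\beta(v)^2}{v^2}$. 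Substituting this back and factoring out $D^2/v^2$ gives
\[ H'(v) = \frac{D^2}{v^2}\big[\beta(v)^2 f'(\beta(v)) - v^2 f'(v)\big] = \frac{D^2}{v^2}\big[\Phi(\beta(v)) - \Phi(v)\big]. \]

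Since $D^2 > 0$ and $v > 0$, the equation $H'(v) = 0$ is equivalent to $\Phi(\beta(v)) = \Phi(v)$. The implication $\beta(v) = v \Rightarrow H'(v) = 0$ is then immediate and needs no extra hypothesis. For the converse I would show that $\Phi$ is strictly increasing on the range of admissible speeds, so that $\Phi(\beta(v)) = \Phi(v)$ forces $\beta(v) = v$. Here $\Phi'(x) = 2x f'(x) + x^2 f''(x)$: strict convexity of $f$ gives $f''(x) > 0$, and because every speed under consideration is at least $v_F$ (the minimizer of $f$, below which we never travel by our standing assumption, so that the effective lower bound satisfies $l \ge v_F$) we have $f'(x) \ge 0$, whence $\Phi'(x) > 0$. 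This is the step I expect to carry the real weight: the ``only if'' direction genuinely needs $f' \ge 0$, which relies on both $v$ and $\beta(v)$ lying in $[v_F, u]$, the region where $\Phi$ is injective. Without restricting to speeds $\ge v_F$ the product $x^2 f'(x)$ need not be monotone, so establishing that the relevant speeds sit in $[v_F,u]$ is the crux of the argument.

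Finally, the second claim follows at once from the equivalence just proved. When $\delta \neq 0$ the value $v^* = (D^1 - D^2)/\delta$ is, by construction, the unique fixed point satisfying $\beta(v^*) = v^*$ (as recorded in the remark following its definition). Applying the first part with $v = v^*$ therefore yields $H'(v^*) = 0$, completing the proof.
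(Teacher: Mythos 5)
Your proof is correct and follows essentially the same route as the paper: compute $H'(v) = \frac{D^2}{v^2}\left[\beta(v)^2 f'(\beta(v)) - v^2 f'(v)\right]$ via the identity $\beta'(v)=\frac{D^2}{D^1}\cdot\frac{\beta(v)^2}{v^2}$, and then invoke strict monotonicity of $x \mapsto x^2 f'(x)$ on the admissible speed range $[v_F,u]$ to get the equivalence with $\beta(v)=v$, hence with $v=v^*$. The one caveat is that the paper only assumes $f$ is strictly convex and differentiable, so your appeal to $\Phi'(x)=2xf'(x)+x^2f''(x)>0$ presumes twice differentiability (and $f''>0$) that is not among the hypotheses; the paper instead derives the strict monotonicity of $x^2f'(x)$ directly as the product of the non-negative, strictly increasing function $f'$ on $[v_F,u]$ with the positive increasing function $x^2$, which is the cleaner argument under the stated assumptions.
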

\begin{proof}
Since $H(v)= F^1_{\sd} + D^1 f(\beta(v)) - F^2_{\sd} - D^2f(v)$, its derivative
\begin{equation*}
\begin{split}
H'(v) &= D^1 f'(\beta(v)) \beta'(v) - D^2 f'(v)\\
&=D^1f'(\beta(v)) \frac{D^1D^2}{(D^2/v + \delta)^2v^2} - D^2 f'(v)\\
&=D^2f'(\beta(v)) (\frac{D^1}{D^2/v +\delta})^2 \frac{1}{v^2} - D^2 f'(v)\\
&=D^2 f'(\beta(v))\frac{(\beta(v))^2}{v^2} - D^2 f'(v).
\end{split}
\end{equation*}
Thus $H'(v)=0$ if and only if $f'(\beta(v))(\beta(v))^2 - G'(v)v^2=0$. The function $f'(v)v^2$ is strictly increasing for $v\in [v_F,u]$, since $f'(v)$ is non-negative and strictly increasing in $[v_F,u]$ and $v^2$ is non-negative and strictly increasing in $[0,\infty)$. When $v\in [v_F,u]$, $f'(\beta(v))(\beta(v))^2 - f'(v)v^2=0$ if and only if $\beta(v) = v$. When $v^*$ is well-defined, $\beta(v)=v$ if and only if $v=v^*$.
\end{proof}

\begin{lemma} \label{lem:H2}
Suppose $\delta = 0$. Then the maximum of $H(v)$ over an interval $[v^{\min}, v^{\max}]$ with $v^{\min}>0$ is attained at $v^{\max}$ if $D^1> D^2$, at $v^{\min}$ if $D^1< D^2$, and at any point in $[v^{\min}, v^{\max}]$ if $D^1=D^2$.
\end{lemma}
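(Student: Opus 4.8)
The plan is to reduce everything to the sign of $H'$, which the proof of Lemma~\ref{lem:H1} has already put in a convenient form. Setting $\delta = 0$ in the definition of $\beta$ collapses it to the linear map $\beta(v) = D^1 v / D^2$, so that $\beta(v)/v \equiv \lambda$ with $\lambda := D^1/D^2 > 0$ independent of $v$. I would dispose of the degenerate case $D^1 = D^2$ first: then $\lambda = 1$, $\beta(v) = v$, and since $C^1(\beta(v)) - C^2(v) = F^1_{\sd} - F^2_{\sd} + (D^1 - D^2)\,f(v) = F^1_{\sd} - F^2_{\sd}$, the function $H$ is constant on $[v^{\min}, v^{\max}]$ and its maximum is attained at every point of the interval.

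For $D^1 \neq D^2$ I would invoke the derivative computed inside the proof of Lemma~\ref{lem:H1}, namely $H'(v) = \frac{D^2}{v^2}\bigl[\,g(\beta(v)) - g(v)\,\bigr]$, where $g(w) := f'(w)\,w^2$. The key structural fact, also established there, is that $g$ is strictly increasing on $[v_F, u]$, because $f'$ is nonnegative and strictly increasing there while $w^2$ is positive and strictly increasing. Since $\delta = 0$ forces $\beta(v) = \lambda v$, the comparison between $\beta(v)$ and $v$ is governed entirely by $\lambda$: if $D^1 > D^2$ then $\lambda > 1$ and $\beta(v) > v$ for all $v > 0$, whereas if $D^1 < D^2$ then $\lambda < 1$ and $\beta(v) < v$.

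Combining the two observations yields the conclusion directly. When $D^1 > D^2$, monotonicity of $g$ gives $g(\beta(v)) > g(v)$, hence $H'(v) > 0$ throughout, so $H$ is strictly increasing and attains its maximum at $v^{\max}$; when $D^1 < D^2$, the reversed inequality gives $H'(v) < 0$, so $H$ is strictly decreasing and its maximum is at $v^{\min}$. Together with the constant case, this is exactly the claimed trichotomy.

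The main point requiring care is the domain on which $g$ is known to be monotone: the sign argument is legitimate only when both $v$ and $\beta(v) = \lambda v$ lie in $[v_F, u]$, since for arguments below $v_F$ the factor $f'$ becomes negative and $g$ need no longer be increasing. I would therefore make explicit that every speed entering the dominance check is a \emph{feasible} travel speed, hence at least $v_F$ (by the standing assumption that no vehicle ever travels slower than $v_F$) and at most $u$. This guarantees $v, \beta(v) \in [v_F, u]$ on the whole interval, so that the comparison $g(\beta(v)) \gtrless g(v)$ is valid and the monotonicity of $H$ follows.
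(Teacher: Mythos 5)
Your proposal is correct and follows essentially the same route as the paper: both reduce the claim to the sign of $H'(v)$ as computed in the proof of Lemma~\ref{lem:H1}, use the strict monotonicity of $v \mapsto f'(v)v^2$ on $[v_F,u]$, and observe that $\delta=0$ makes $\beta(v)=({D^1}/{D^2})\,v$, so the comparison $\beta(v)\gtrless v$ is decided by the ratio $D^1/D^2$. Your explicit remark that both $v$ and $\beta(v)$ must lie in $[v_F,u]$ for the monotonicity of $f'(v)v^2$ to apply is a welcome precision that the paper leaves implicit.
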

\begin{proof}
When $D^1= D^2$, $\beta(v)=v$ for any $v$ and $H(v)$ is constant. When $D^1> D^2$, $\beta(v)>v$ and $H'(v)>0$ for any $v\in [v^{\min}, v^{\max}]$, based on the proof of Lemma~\ref{lem:H1}. Thus $H(v)$ is strictly increasing and its maximum is attained at $v^{\max}$. The case of $D^1 < D^2$ can be proved in a similar way. 
\end{proof}

\section{Detailed computational results of the BCP algorithms with different route choices for the PRP instances}

\begin{table}[htb]
\footnotesize
\centering
\begin{tabular}{llllllllllllllll}
& & \multicolumn{4}{c}{Elementary} & & \multicolumn{4}{c}{2-cycle-free} & & \multicolumn{4}{c}{q-route} \\
\cline{3-6} \cline{8-11} \cline{13-16}
 & & \multicolumn{2}{c}{LP} & & & & \multicolumn{2}{c}{LP} & & & & \multicolumn{2}{c}{LP} \\
\cline{3-4} \cline{8-9} \cline{13-14}
 Instance & & Value & Time & BUB & Time & & Value & Time & BUB & Time & & Value & Time & BUB & Time \\
\hline
UK10A-1 & & 170.64 & 0.48 & 170.64 & 0.5 & & 170.64 & 1.43 & 170.64 & 1.4 & & 170.64 & 1.81 & 170.64 & 1.8 \\
UK10A-2 & & 204.88 & 0.26 & 204.88 & 0.3 & & 204.88 & 1.00 & 204.88 & 1.0 & & 204.88 & 1.18 & 204.88 & 1.2 \\
UK10A-3 & & 200.34 & 0.70 & 200.34 & 0.7 & & 200.34 & 0.88 & 200.34 & 0.9 & & 200.34 & 0.53 & 200.34 & 0.5 \\
UK10A-4 & & 189.88 & 0.22 & 189.88 & 0.2 & & 189.88 & 1.35 & 189.88 & 1.4 & & 189.88 & 1.40 & 189.88 & 3.5 \\
UK10A-5 & & 175.59 & 0.50 & 175.59 & 0.5 & & 175.59 & 3.40 & 175.59 & 3.4 & & 175.59 & 2.85 & 175.59 & 2.9 \\
UK10A-6 & & 214.48 & 0.24 & 214.48 & 0.2 & & 214.48 & 0.53 & 214.48 & 0.5 & & 214.48 & 0.78 & 214.48 & 0.8 \\
UK10A-7 & & 190.14 & 0.12 & 190.14 & 0.1 & & 190.14 & 1.74 & 190.14 & 1.7 & & 190.14 & 2.81 & 190.14 & 2.8 \\
UK10A-8 & & 222.17 & 0.03 & 222.17 & 0.0 & & 222.17 & 0.15 & 222.17 & 0.1 & & 222.17 & 0.14 & 222.17 & 0.1 \\
UK10A-9 & & 174.54 & 0.12 & 174.54 & 0.1 & & 174.54 & 2.07 & 174.54 & 2.1 & & 174.54 & 6.69 & 174.54 & 6.7 \\
UK10A-10 & & 189.82 & 0.39 & 189.82 & 0.4 & & 189.82 & 0.42 & 189.82 & 0.4 & & 189.82 & 0.59 & 189.82 & 0.6 \\
\hline
UK10B-1 & & 246.44 & 0.01 & 246.44 & 0.0 & & 246.44 & 0.01 & 246.44 & 0.0 & & 246.44 & 0.01 & 246.44 & 0.0 \\
UK10B-2 & & 303.73 & 0.01 & 303.73 & 0.0 & & 303.73 & 0.01 & 303.73 & 0.0 & & 303.73 & 0.01 & 303.73 & 0.0 \\
UK10B-3 & & 301.89 & 0.01 & 301.89 & 0.0 & & 301.89 & 0.01 & 301.89 & 0.0 & & 301.89 & 0.01 & 301.89 & 0.0 \\
UK10B-4 & & 273.90 & 0.01 & 273.90 & 0.0 & & 273.90 & 0.01 & 273.90 & 0.0 & & 273.90 & 0.02 & 273.90 & 0.0 \\
UK10B-5 & & 255.07 & 0.01 & 255.07 & 0.0 & & 255.47 & 0.00 & 255.10 & 0.0 & & 255.47 & 0.00 & 255.07 & 0.0 \\
UK10B-6 & & 332.34 & 0.01 & 332.34 & 0.0 & & 332.34 & 0.01 & 332.34 & 0.0 & & 332.34 & 0.01 & 332.34 & 0.0 \\
UK10B-7 & & 308.85 & 0.00 & 314.64 & 0.1 & & 308.85 & 0.00 & 314.64 & 0.1 & & 304.91 & 0.00 & 314.64 & 0.1 \\
UK10B-8 & & 333.12 & 0.00 & 339.32 & 0.0 & & 333.11 & 0.00 & 339.32 & 0.0 & & 333.12 & 0.00 & 339.32 & 0.0 \\
UK10B-9 & & 261.10 & 0.01 & 261.10 & 0.0 & & 261.10 & 0.00 & 261.10 & 0.0 & & 261.10 & 0.01 & 261.10 & 0.0 \\
UK10B-10 & & 285.20 & 0.01 & 285.20 & 0.0 & & 285.20 & 0.01 & 285.20 & 0.0 & & 285.20 & 0.00 & 285.20 & 0.0 \\
\hline
UK10C-1 & & 210.21 & 0.02 & 210.21 & 0.0 & & 210.18 & 0.05 & 210.18 & 0.1 & & 209.66 & 0.10 & 210.18 & 0.2 \\
UK10C-2 & & 271.93 & 0.03 & 271.93 & 0.0 & & 268.93 & 0.10 & 271.93 & 0.3 & & 265.77 & 0.20 & 271.93 & 1.1 \\
UK10C-3 & & 229.18 & 0.03 & 229.18 & 0.0 & & 229.18 & 0.05 & 229.18 & 0.1 & & 227.41 & 0.20 & 229.18 & 0.5 \\
UK10C-4 & & 230.52 & 0.01 & 230.52 & 0.0 & & 227.31 & 0.00 & 230.52 & 0.2 & & 222.99 & 0.10 & 230.52 & 6.2 \\
UK10C-5 & & 205.49 & 0.02 & 205.49 & 0.0 & & 205.49 & 0.08 & 205.49 & 0.1 & & 203.33 & 0.30 & 205.49 & 0.8 \\
UK10C-6 & & 255.82 & 0.02 & 255.82 & 0.0 & & 255.82 & 0.03 & 255.82 & 0.0 & & 254.48 & 0.10 & 255.82 & 0.1 \\
UK10C-7 & & 217.79 & 0.03 & 217.79 & 0.0 & & 217.79 & 0.07 & 217.79 & 0.1 & & 217.79 & 0.33 & 217.79 & 0.3 \\
UK10C-8 & & 251.29 & 0.03 & 251.29 & 0.0 & & 251.29 & 0.02 & 251.29 & 0.0 & & 251.29 & 0.10 & 251.29 & 0.1 \\
UK10C-9 & & 186.04 & 0.06 & 186.04 & 0.1 & & 186.04 & 0.12 & 186.04 & 0.1 & & 186.04 & 0.09 & 186.04 & 0.1 \\
UK10C-10 & & 231.62 & 0.04 & 231.62 & 0.0 & & 231.62 & 0.02 & 231.62 & 0.0 & & 230.93 & 0.10 & 231.62 & 0.2 \\
\hline
\end{tabular}
\caption{Results of the BCP algorithms under different route choices for the PRP instances with 10 customers} \label{table-prp10}
\end{table}
\begin{table}[htb]
\footnotesize
\centering
\begin{tabular}{llllllllllllllll}
& & \multicolumn{4}{c}{elementary} & & \multicolumn{4}{c}{2-cycle-free} & & \multicolumn{4}{c}{q-route} \\
\cline{3-6} \cline{8-11} \cline{13-16}
 & & \multicolumn{2}{c}{LP} & & & & \multicolumn{2}{c}{LP} & & & & \multicolumn{2}{c}{LP} \\
\cline{3-4} \cline{8-9} \cline{13-14}
 Instance & & Value & Time & BUB & Time & & Value & Time & BUB & Time & & Value & Time & BUB & \begin{tabular}{l} Time/\\Gap \end{tabular}\\
\hline
UK20A-1 & & 351.82 & 26.5 & 351.82 & 26.5 & & 351.69 & 41.6 & 351.82 & 64.0 & & 350.46 & 14.5 & 351.82 & 19.4 \\
UK20A-2 & & 365.77 & 31.5 & 365.77 & 31.5 & & 365.77 & 16.7 & 365.77 & 16.7 & & 365.77 & 4.68 & 365.77 & 4.7 \\
UK20A-3 & & 230.49 & 1204 & 230.49 & 1204 & & 230.49 & 556 & 230.49 & 556 & & 230.49 & 52.9 & 230.49 & 52.9 \\
UK20A-4 & & 347.04 & 2370 & 347.04 & 2370 & & 347.04 & 659 & 347.04 & 659 & & 347.04 & 59.4 & 347.04 & 59.4 \\
UK20A-5 & & 323.44 & 154 & 323.44 & 154 & & 323.44 & 134 & 323.44 & 134 & & 323.31 & 17.3 & 323.44 & 28.7 \\
UK20A-6 & & 364.11 & 437 & 364.23 & 757 & & 364.11 & 40.5 & 364.23 & 94.6 & & 363.67 & 6.00 & 364.23 & 17.0 \\
UK20A-7 & & - & - & 3318.70 & - & & - & - & 3016.70 & - & & 253.43 & 2429 & 253.61 & 0.1\% \\
UK20A-8 & & 301.51 & 151 & 301.51 & 151 & & 301.51 & 50.1 & 301.51 & 50.1 & & 301.51 & 15.9 & 301.51 & 15.9 \\
UK20A-9 & & 362.56 & 38.6 & 362.56 & 38.6 & & 362.56 & 98.3 & 362.56 & 98.3 & & 362.56 & 11.4 & 362.56 & 11.4 \\
UK20A-10 & & 313.33 & 455 & 313.33 & 455 & & 313.33 & 231 & 313.33 & 231 & & 313.33 & 16.7 & 313.33 & 16.7 \\
\hline
UK20B-1 & & 469.35 & 0.04 & 469.35 & 0.0 & & 469.35 & 0.04 & 469.35 & 0.0 & & 469.35 & 0.04 & 469.35 & 0.0 \\
UK20B-2 & & 477.05 & 0.18 & 477.05 & 0.2 & & 477.05 & 0.06 & 477.05 & 0.1 & & 477.05 & 0.10 & 477.05 & 0.1 \\
UK20B-3 & & 354.46 & 0.11 & 354.46 & 0.1 & & 354.46 & 0.11 & 354.46 & 0.1 & & 354.46 & 0.11 & 354.46 & 0.1 \\
UK20B-4 & & 523.59 & 0.12 & 523.59 & 0.1 & & 523.59 & 0.08 & 523.59 & 0.1 & & 523.59 & 0.15 & 523.59 & 0.1 \\
UK20B-5 & & 447.33 & 0.08 & 447.33 & 0.1 & & 447.33 & 0.07 & 447.33 & 0.1 & & 439.38 & 0.20 & 447.33 & 1.2 \\
UK20B-6 & & 511.11 & 0.40 & 511.78 & 1.4 & & 511.11 & 0.10 & 511.78 & 0.4 & & 511.78 & 1.34 & 511.78 & 1.3 \\
UK20B-7 & & 377.94 & 1.60 & 379.02 & 4.0 & & 379.02 & 0.62 & 379.02 & 0.6 & & 377.57 & 0.30 & 379.02 & 0.4 \\
UK20B-8 & & 418.00 & 0.10 & 431.31 & 0.3 & & 418.00 & 0.10 & 431.31 & 0.6 & & 416.64 & 0.10 & 431.31 & 1.7 \\
UK20B-9 & & 545.37 & 0.10 & 548.68 & 0.1 & & 545.37 & 0.10 & 548.68 & 0.1 & & 548.68 & 0.23 & 548.68 & 0.2 \\
UK20B-10 & & 410.32 & 0.04 & 410.32 & 0.0 & & 410.32 & 0.03 & 410.32 & 0.0 & & 407.96 & 0.10 & 410.32 & 0.2 \\
\hline
UK20C-1 & & 432.35 & 0.73 & 432.35 & 0.7 & & 424.06 & 1.30 & 432.35 & 90.3 & & 420.38 & 1.60 & 432.35 & 0.9\% \\
UK20C-2 & & 445.64 & 2.00 & 448.29 & 12.9 & & 444.96 & 1.30 & 448.29 & 4.4 & & 442.07 & 1.00 & 448.29 & 7.7 \\
UK20C-3 & & 287.48 & 4.39 & 287.48 & 4.4 & & 287.20 & 2.90 & 287.48 & 4.1 & & 281.39 & 3.40 & 287.04 & 76.2 \\
UK20C-4 & & 432.55 & 0.90 & 434.23 & 2.9 & & 431.69 & 0.90 & 434.23 & 4.4 & & 422.25 & 1.5 & 434.23 & 0.7\% \\
UK20C-5 & & 380.28 & 1.70 & 381.70 & 6.1 & & 379.85 & 1.40 & 381.70 & 4.5 & & 375.64 & 1.60 & 381.70 & 41.9 \\
UK20C-6 & & 441.43 & 2.50 & 444.35 & 23.4 & & 439.28 & 1.10 & 444.35 & 28.9 & & 438.56 & 0.80 & 444.35 & 16.3 \\
UK20C-7 & & 317.73 & 800 & 317.73 & 800 & & 317.44 & 52.0 & 317.73 & 63.1 & & 308.47 & 34.8 & 317.73 & 0.9\% \\
UK20C-8 & & 410.35 & 0.57 & 410.35 & 0.6 & & 410.35 & 0.52 & 410.35 & 0.5 & & 410.16 & 1.00 & 410.35 & 1.4 \\
UK20C-9 & & 421.39 & 0.25 & 421.39 & 0.2 & & 421.39 & 0.49 & 421.39 & 0.5 & & 410.48 & 1.30 & 421.39 & 44.3 \\
UK20C-10 & & 384.88 & 0.54 & 384.88 & 0.5 & & 380.30 & 1.20 & 384.88 & 13.7 & & 373.95 & 1.90 & 384.88 & 130 \\
\hline
\end{tabular}
\caption{Results of the BCP algorithms under different route choices for the PRP instances with 20 customers}
\end{table}

\begin{landscape}
\begin{table}[htb]
\small
\centering
\renewcommand{\tabcolsep}{2.0mm}
\begin{tabular}{llllllllllllllll}
& & \multicolumn{4}{c}{Elementary} & & \multicolumn{4}{c}{2-cycle-free} & & \multicolumn{4}{c}{q-route} \\
\cline{3-6} \cline{8-11} \cline{13-16}
 & & \multicolumn{2}{c}{LP} & & & & \multicolumn{2}{c}{LP} & & & & \multicolumn{2}{c}{LP} \\
\cline{3-4} \cline{8-9} \cline{13-14}
 Instance & & Value & Time & BUB & \begin{tabular}{l} Time/\\Gap \end{tabular} & & Value & Time & BUB & \begin{tabular}{l} Time/\\Gap \end{tabular} & & Value & Time & BUB & \begin{tabular}{l} Time/\\Gap \end{tabular} \\
\hline
UK25A-1 & & - & - & 3316.14 & - & & - & - & 389.46 & - & & 316.18 & 304 & 316.18 & 304 \\
UK25A-2 & & - & - & 397.10 & - & & - & - & 373.92 & - & & 373.92 & 1244 & 373.92 & 1562 \\
UK25A-3 & & - & - & 268.46 & - & & - & - & 273.08 & - & & - & - & 3591.44 & - \\
UK25A-4 & & - & - & 677.11 & - & & - & - & 321.14 & - & & - & - & 880.80 & - \\
UK25A-5 & & - & - & 3932.00 & - & & 365.00 & 2925 & 365.00 & 2925 & & 365.00 & 169 & 365.00 & 169 \\
UK25A-6 & & - & - & 397.30 & - & & - & - & 313.72 & - & & 312.22 & 552 & 316.83 & 1.2\% \\
UK25A-7 & & - & - & 467.97 & - & & - & - & 3431.12 & - & & 350.72 & 610 & 350.72 & 610 \\
UK25A-8 & & 359.08 & 1432 & 359.08 & 1432 & & 359.08 & 1107 & 359.08 & 1107 & & 358.69 & 84.1 & 359.08 & 118 \\
UK25A-9 & & - & - & 346.40 & - & & - & - & 380.68 & - & & 325.91 & 184 & 330.60 & 0.9\% \\
UK25A-10 & & - & - & 411.33 & - & & - & - & 411.33 & - & & 411.33 & 3503 & 411.33 & 3503 \\
\hline
UK25B-1 & & 474.35 & 2.20 & 475.13 & 6.5 & & 474.35 & 0.60 & 475.13 & 1.0 & & 473.63 & 0.50 & 475.13 & 1.3 \\
UK25B-2 & & 530.00 & 8.60 & 533.59 & 141 & & 530.00 & 0.50 & 533.59 & 10.6 & & 533.59 & 63.6 & 533.59 & 63.6 \\
UK25B-3 & & 390.17 & 15.7 & 390.27 & 37.9 & & 390.17 & 1.80 & 390.27 & 2.6 & & 388.45 & 1.90 & 390.27 & 9.1 \\
UK25B-4 & & 467.49 & 239 & 467.49 & 239 & & 458.56 & 0.80 & 467.49 & 132 & & 449.56 & 0.80 & 467.49 & 1.4\% \\
UK25B-5 & & 489.16 & 192 & 491.69 & 1286 & & 489.16 & 2.50 & 491.69 & 11.2 & & 488.80 & 1.60 & 491.69 & 10.0 \\
UK25B-6 & & 515.46 & 1.00 & 516.71 & 16.0 & & 515.46 & 0.30 & 516.71 & 1.5 & & 514.11 & 0.40 & 516.71 & 6.5 \\
UK25B-7 & & 525.68 & 120 & 526.12 & 164 & & 524.90 & 0.70 & 526.12 & 2.3 & & 525.69 & 0.40 & 526.12 & 2.0 \\
UK25B-8 & & 534.38 & 1.60 & 537.93 & 126 & & 533.59 & 0.40 & 537.15 & 3.5 & & 533.38 & 0.50 & 537.15 & 3.1 \\
UK25B-9 & & 436.23 & 218 & 436.84 & 464 & & 436.23 & 1.70 & 436.84 & 2.8 & & 434.89 & 1.40 & 436.84 & 3.5 \\
UK25B-10 & & 531.86 & 13.5 & 533.88 & 126 & & 531.86 & 1.00 & 533.88 & 5.0 & & 531.10 & 0.90 & 533.88 & 3.4 \\
\hline
UK25C-1 & & 417.59 & 1826 & 417.59 & 1826 & & 416.88 & 52.9 & 417.59 & 163 & & 409.86 & 61.9 & 417.59 & 0.7\% \\
UK25C-2 & & 467.87 & 1506 & 468.95 & 0.1\% & & 467.51 & 53.7 & 468.95 & 548 & & 453.86 & 25.3 & 481.59 & 4.0\% \\
UK25C-3 & & - & - & 425.39 & - & & 326.31 & 329 & 328.47 & 0.1\% & & 323.41 & 55.0 & 329.34 & 1.1\% \\
UK25C-4 & & 388.98 & 48.4 & 389.56 & 77.8 & & 386.21 & 3.60 & 391.34 & 1.3\% & & 367.54 & 33.4 & 391.11 & 4.4\% \\
UK25C-5 & & - & - & 1444.90 & - & & - & - & 458.55 & - & & - & - & 456.58 & - \\
UK25C-6 & & 415.31 & 436 & 418.12 & 0.3\% & & 415.29 & 8.80 & 418.12 & 135 & & 412.09 & 5.30 & 418.12 & 0.4\% \\
UK25C-7 & & - & - & 727.00 & - & & 478.25 & 48.1 & 482.31 & 0.4\% & & 473.62 & 13.8 & 481.75 & 1.2\% \\
UK25C-8 & & 497.49 & 158 & 499.18 & 560 & & 496.31 & 8.00 & 499.18 & 30.5 & & 481.75 & 4.40 & 499.18 & 1.1\% \\
UK25C-9 & & 406.22 & 109 & 413.28 & 1.0\% & & 405.99 & 7.50 & 412.51 & 520 & & 400.25 & 6.20 & 412.51 & 1.1\% \\
UK25C-10 & & 509.00 & 1441 & 551.17 & 8.3\% & & 508.78 & 40.6 & 513.37 & 0.4\% & & 497.67 & 4.80 & 513.47 & 1.5\% \\
\hline
\end{tabular}
\caption{Results of the BCP algorithms under different route choices for the PRP instances with 25 customers}
\end{table}
\begin{table}
\small
\centering
\renewcommand{\tabcolsep}{1.50mm}
\begin{tabular}{llllllllllllllll}
& & \multicolumn{4}{c}{Elementary} & & \multicolumn{4}{c}{2-cycle-free} & & \multicolumn{4}{c}{q-routes} \\
\cline{3-6} \cline{8-11} \cline{13-16}
 & & \multicolumn{2}{c}{LP} & & & & \multicolumn{2}{c}{LP} & & & & \multicolumn{2}{c}{LP} \\
\cline{3-4} \cline{8-9} \cline{13-14}
 Instance & & Value & Time & BUB & \begin{tabular}{l} Time/\\Gap \end{tabular} & & Value & Time & BUB & \begin{tabular}{l} Time/\\Gap \end{tabular} & & Value & Time & BUB & \begin{tabular}{l} Time/\\Gap \end{tabular} \\
\hline
UK50A-1 & & - & - & 2748.2 & - & & - & - & 689.6 & - & & 673.3 & 1884 & 6425.2 & 846\% \\
UK50A-2 & & - & - & 3119.9 & - & & - & - & 929.3 & - & & 668.5 & 3583 & 668.5 & 3583 \\
UK50A-3 & & - & - & 7512.3 & - & & - & - & 7750.3 & - & & 688.2 & 544 & 988.6 & 43\% \\
UK50A-4 & & - & - & 3653.8 & - & & - & - & 4085.5 & - & & - & - & 3845.7 & - \\
UK50A-5 & & - & - & 2847.0 & - & & - & - & 7434.9 & - & & 702.2 & 421 & 706.3 & 0.4\% \\
UK50A-6 & & - & - & 3464.4 & - & & - & - & 6697.3 & - & & 631.3 & 1621 & 631.3 & 1621 \\
UK50A-7 & & - & - & 833.9 & - & & - & - & 7774.5 & - & & - & - & 6758.5 & - \\
UK50A-8 & & - & - & 650.6 & - & & - & - & 638.9 & - & & 636.9 & 1052 & 638.5 & 0.1\% \\
UK50A-9 & & - & - & 7292.9 & - & & - & - & 3728.1 & - & & - & - & 786.0 & - \\
UK50A-10 & & - & - & 2182.7 & - & & - & - & 7369.2 & - & & - & - & 6533.2 & - \\
\hline 
UK50B-1 & & - & - & 3055.38 & - & & 984.90 & 6.80 & 992.15 & 0.3\% & & 978.67 & 4.70 & 992.00 & 0.7\% \\
UK50B-2 & & - & - & 1887.07 & - & & 1021.91 & 5.90 & 1028.01 & 0.0\% & & 1018.19 & 4.80 & 1028.01 & 0.4\% \\
UK50B-3 & & 1013.47 & 1807 & 8857.58 & 774.0\% & & 1013.44 & 8.40 & 1022.40 & 0.5\% & & 1009.38 & 10.1 & 1022.69 & 0.8\% \\
UK50B-4 & & 1141.88 & 596 & 2624.56 & 129.8\% & & 1141.85 & 5.50 & 1149.08 & 1985 & & 1140.73 & 3.30 & 1149.08 & 3408 \\
UK50B-5 & & 963.84 & 61.6 & 964.72 & 185 & & 963.84 & 4.00 & 964.72 & 6.8 & & 960.57 & 1.70 & 964.72 & 14.8 \\
UK50B-6 & & - & - & 8962.33 & - & & 911.64 & 14.5 & 921.95 & 0.8\% & & 907.76 & 7.50 & 924.94 & 1.4\% \\
UK50B-7 & & - & - & 3407.17 & - & & 847.84 & 16.2 & 852.33 & 1022 & & 839.92 & 8.20 & 852.33 & 0.8\% \\
UK50B-8 & & 924.43 & 50.5 & 924.43 & 50.5 & & 924.43 & 3.62 & 924.43 & 3.6 & & 923.90 & 6.20 & 924.43 & 13.8 \\
UK50B-9 & & 1027.56 & 2472 & 1592.42 & 55.0\% & & 1027.41 & 3.40 & 1034.92 & 2252 & & 1023.36 & 2.50 & 1036.14 & 0.4\% \\
UK50B-10 & & - & - & 8504.07 & - & & 1022.60 & 83.2 & 1026.80 & 0.2\% & & 1017.29 & 17.9 & 1033.92 & 1.2\% \\
\hline
UK50C-1 & & - & - & 8400.51 & - & & 900.91 & 308 & 1198.26 & 32.8\% & & 886.64 & 57.3 & 1243.50 & 38.8\% \\
UK50C-2 & & - & - & 1846.43 & - & & 884.84 & 771 & 1227.79 & 38.6\% & & 872.69 & 73.1 & 1387.62 & 58.2\% \\
UK50C-3 & & - & - & 4949.76 & - & & 851.70 & 680 & 1015.20 & 19.0\% & & 844.60 & 65.8 & 869.18 & 2.5\% \\
UK50C-4 & & - & - & 7966.25 & - & & 996.01 & 200 & 1003.43 & 0.5\% & & 990.32 & 38.5 & 1002.20 & 0.8\% \\
UK50C-5 & & 916.44 & 2018 & 8394.03 & 815.9\% & & 914.41 & 48.6 & 925.85 & 1.0\% & & 902.56 & 22.1 & 1177.71 & 29.5\% \\
UK50C-6 & & - & - & 7557.40 & - & & 792.78 & 1033 & 2607.89 & 228.5\% & & 784.36 & 50.6 & 1569.29 & 99.4\% \\
UK50C-7 & & - & - & 1754.22 & - & & 851.59 & 508 & 852.67 & 0.0\% & & 834.81 & 64.0 & 895.22 & 6.4\% \\
UK50C-8 & & - & - & 826.66 & - & & 789.03 & 463 & 1233.17 & 56.0\% & & 782.84 & 87.6 & 2643.45 & 236.6\% \\
UK50C-9 & & 959.55 & 331 & 962.69 & 0.2\% & & 957.10 & 32.5 & 962.69 & 0.2\% & & 951.46 & 12.2 & 963.40 & 0.8\% \\
UK50C-10 & & - & - & 4547.35 & - & & - & - & 8330.19 & - & & 959.03 & 152 & 985.50 & 2.5\% \\
\hline
\end{tabular}
\caption{Results of the BCP algorithms under different route choices for the PRP instances with 50 customers} \label{table-prp50}
\end{table}
\end{landscape}

\end{appendices}


\clearpage
\bibliographystyle{informs2014} 

\bibliography{JRSP}


\end{document}